\documentclass[11pt]{amsart}

\usepackage[top=1in, bottom=1in, left=1in, right=1in]{geometry}

\usepackage{graphicx}
\usepackage[dvipsnames]{xcolor}
\usepackage{amsfonts,amsmath,amssymb,amsthm,bbm,centernot}
\usepackage{mathrsfs}
\usepackage{enumerate}
\usepackage[normalem]{ulem}
\usepackage{hyperref}
\usepackage{mathtools}
\usepackage{multirow}

\newtheorem{theorem}{Theorem}[section]
\newtheorem{proposition}[theorem]{Proposition}
\newtheorem{lemma}[theorem]{Lemma}
\newtheorem{corollary}[theorem]{Corollary}

\theoremstyle{remark}

\theoremstyle{definition}

\newcommand{\N}{\mathbb N}     
\newcommand{\R}{\mathbb R}     
\newcommand{\Z}{\mathbb Z}     

\renewcommand{\epsilon}{\varepsilon}

\renewcommand{\P}{\mathbb{P}}

\newcommand{\B}{\mathfrak{B}}
\newcommand{\D}{\mathfrak{D}}

\newcommand{\fl}[1]{\lfloor #1 \rfloor}  
\newcommand{\ind}[1]{ \mathbbm{1}_{\{ #1 \}} }

\DeclareMathOperator{\Var}{Var}

\title[P\'olya's urns for PSR walks]{Extension of results on generalized P\'olya's urns for polynomially self-repelling walks}

\author{Elena Kosygina}
\address{Elena Kosygina\\One Bernard Baruch Way \\ Department of Mathematics, Box B6-230 \\ Baruch College \\ New York, NY 10010 \\ USA}
\email{elena.kosygina@baruch.cuny.edu}
\urladdr{http://www.baruch.cuny.edu/math/elenak/}

\author{Laure Mar\^ech\'e}
\address{Laure Mar\^ech\'e\\Institut de Recherche Mathématique Avancée, UMR 7501 Université de Strasbourg et CNRS, 
7 rue René Descartes, 67000 Strasbourg, France}
\email{laure.mareche@math.unistra.fr}
\urladdr{https://irma.math.unistra.fr/~mareche/}

\author{Thomas Mountford}
\address{Thomas Mountford\\\'Ecole Polytechnique F\'ed\'erale de Lausanne\\Department of Mathematics\\EPFL SB MATH PRST\\
MA B1 517 (B\^atiment MA)
Station 8
CH-1015 Lausanne\\
Switzerland}
\email{thomas.mountford@epfl.ch}
\urladdr{http://people.epfl.ch/thomas.mountford}

\author{Jonathon Peterson}
\address{Jonathon Peterson\\Purdue University\\Department of Mathematics\\150 N University Street\\West Lafayette, IN  47907\\USA}
\email{peterson@purdue.edu}
\urladdr{http://www.math.purdue.edu/~peterson}

\subjclass[2020]{Primary 60K35; Secondary 60J10}

\keywords{Self-interacting random walks, generalized Pólya's urns}

\begin{document}

\begin{abstract}
  This is a technical note which extends the results of Kosygina, Mountford and Peterson (Ann. Probab., 51(5):1684–1728, 2023, Section 4) about generalized P\'olya's urns from a specific weight
  function $w(n) = (n+1)^{-\alpha}$ to a general family of
  weight functions satisfying
  $(w(n))^{-1}=n^{\alpha}\left(1+2Bn^{-1}+O\left(n^{-2}\right)\right)$
  as $n\to \infty$. The latter was considered by T\'oth (Ann. Probab., 24(3):1324–1367, 1996) as a part of his study of polynomially self-repelling
  walks. This extension will be used in forthcoming developments
  concerning scaling limits of these walks and related processes.
\end{abstract}

\maketitle

\section{Brief introduction}

Given a weight function $w:\N_0:=\N\cup\{0\}\to (0,\infty)$, a self-interacting random walk $(X_n)_{n\ge 0}$ on $\mathbb{Z}$ starts from the origin and takes steps $X_{n+1}-X_{n} = \pm 1$ with probabilities given by 
\begin{align}
 &P\left( X_{n+1} = X_n+1 \mid X_i, \, i\leq n \right) \nonumber \\
&\qquad  = 1 - P\left( X_{n+1} = X_n-1 \mid X_i, \, i\leq n \right)
 = \frac{w(\ell_n(X_n))}{w(\ell_n(X_n)) + w(\ell_n(X_n-1)) }, \label{tp}
\end{align}
where 
\[
 \ell_n(x) = \sum_{i=1}^n \ind{\{X_{i-1},X_i\} = \{x,x+1\} }, \quad n\geq 0, x \in \Z, 
\]
is the number of times the walk has crossed the (undirected) edge $\{x,x+1\}$ by time $n$. If the weight function $w$ is non-increasing then the walk is self-repelling, while if $w$ is non-decreasing then the walk is self-attracting. 

In \cite{tGRK}, among other classes of self-interacting random walks on $\Z$, B.\,T\'oth introduced and studied a family of so-called \textbf{polynomially self-repelling (PSR) walks}. These random walks are characterized by a non-increasing $w$ which has the following asymptotics: for some $\alpha>0$ and $B \in \R$
\begin{equation}\label{generalw}
 \frac{1}{w(n)}=n^{\alpha}\left(1+\frac{2B}{n}+O\left(\frac{1}{n^2}\right)\right)\quad\text{as $n\to \infty$.}
\end{equation}

One of the results in \cite{tGRK} is a generalized Ray-Knight
theorem for PSR walks, that is a scaling limit for their local times (numbers of crossings of directed edges) when one of these local times reaches some threshold. The limiting
process in that theorem is consistent with convergence of 
the rescaled PSR walk (under the diffusive scaling) to a process known as a
Brownian motion perturbed at its extrema (BMPE) (see \cite{cdPUPBM,pwPBM}). However, a recent
result in \cite{KMP23} shows that for a specific
choice of the weight function
\begin{equation}\label{specificw}
 w(n) = (n+1)^{-\alpha}, \quad n\geq 0, 
\end{equation}
the rescaled PSR walk cannot converge to a BMPE.  This implies
that a generalized Ray-Knight theorem is not sufficient for
identifying the functional limit of a random walk. To obtain this
``negative'' result it was enough to demonstrate non-convergence for
the walk with the specific choice \eqref{specificw} for the weight
function $w$.  The results of \cite{KMP23} left open the
question whether PSR walks have a scaling limit and, if they do, what this limit might be. For a ``positive'' result such as a
functional limit theorem it makes sense to consider PSR walks with general
weight functions \eqref{generalw}, and the authors of this note
plan on addressing this question in a forthcoming paper.
Hence, the purpose of this note is to show that certain results that were proved in \cite{KMP23} for the PSR walk with weight function \eqref{specificw} can, in fact, be extended to the general PSR case with weight function \eqref{generalw}. 

While assuming that the weight function $w$ is monotone is natural,
since it makes the walk either self-repelling or self-attracting, it
may be of interest in the future to consider more general models where
the weight function is not monotone.  The results of this note are
valid for any weight function which satisfies \eqref{generalw} and do
not require monotonicity.

Many of the proofs of the statements given below can be done as with the corresponding results in \cite{KMP23}. However, our more general asymptotics and the lack of monotonicity will often require more careful justifications, and at some points new arguments.

\section{PSR walks and the associated generalized P\'olya urn model}\label{sec:Polya}
 
For a self-interacting walk \eqref{tp}, the sequence of left/right
steps from a given site of $\mathbb{Z}$ can be thought of as being generated by a
generalized P\'olya urn process at that site.  The urn processes
are slightly different depending on whether the site is to the left of
the origin, to the right of the origin, or at the origin and, thus,
following \cite{tGRK}, we first describe the family of generalized
P\'olya urn processes, and then later specialize depending on the
  site's location with respect to the origin. To generate a random
  walk path, one can generate independent urn processes at each site,
  and then let the walk take a step to the right when the next ball
  drawn from the urn at the walk's current location is red and a step
  to the left when it is blue.

Given sequences
of positive numbers $\{b(i)\}_{i\geq 0}$ and $\{r(i)\}_{i\geq 0}$, the
generalized P\'olya urn process
$\{(\mathfrak{B}_n, \mathfrak{R}_n)\}_{n\geq 0}$ is a Markov chain on
$\N_0^2$ started at $(\mathfrak{B}_0, \mathfrak{R}_0) = (0,0)$ with
transition probabilities given by
\begin{align*}
 \P\left( (\mathfrak{B}_{n+1}, \mathfrak{R}_{n+1}) = (i+1,j) \mid (\mathfrak{B}_n, \mathfrak{R}_n) = (i,j) \right) &= \frac{b(i)}{b(i)+r(j)},\ \text{and}\\
\P\left( (\mathfrak{B}_{n+1}, \mathfrak{R}_{n+1}) = (i,j+1) \mid (\mathfrak{B}_n, \mathfrak{R}_n) = (i,j) \right) &= \frac{r(j)}{b(i)+r(j)},\ \ i,j\ge 0.
\end{align*}
If we consider this as being generated by drawing red/blue balls from an urn, then $\mathfrak{B}_n$ and $\mathfrak{R}_n$ will be the numbers of blue and red balls, respectively, drawn from the urn up to time $n$.

\textbf{Rubin's construction.} It will be helpful at times to use an
equivalent construction of a generalized P\'olya urn process (due to
H.\,Rubin, see \cite{Dav90}) using exponential random variables.  Suppose that
$B_1,B_2,\ldots,R_1,R_2,\ldots$ are independent random variables with
$B_i \sim \text{Exp}(b(i-1))$ and $R_i \sim \text{Exp}(r(i-1))$, for
$i\geq 1$.  We then make a red mark on $(0,\infty)$ at
$\sum_{i=1}^k R_i$ for every $k\geq 1$ and similarly a blue mark on
$(0,\infty)$ at $\sum_{i=1}^k B_i$ for every $k\geq 1$.  The urn
process can then be constructed by reading off the sequence of red and
blue marks in order: every red mark corresponds to drawing a red ball  ($\mathfrak{R}$ increases by one) and every blue mark
corresponds to drawing a blue ball ($\mathfrak{B}$
increases by one).

For $* \in \{-,+,0\}$ and a fixed weight function $w: \N_0 \to (0,\infty)$  we will let $\{(\mathfrak{B}^*_n, \mathfrak{R}^*_n)\}_{n\geq 0}$ be the generalized P\'olya urn process corresponding to the sequences $\{b^*(i)\}_{i\geq 0}$ and $\{r^*(i)\}_{i\geq 0}$ defined by
\begin{equation}\label{rb}
\begin{array}{l}
 b^-(i) = w(2i) \\ 
 r^-(i) = w(2i+1)
\end{array},
\qquad 
\begin{array}{l}
 b^+(i) = w(2i+1) \\ 
 r^+(i) = w(2i)
\end{array},
\quad\text{and}\quad  
\begin{array}{l}
 b^0(i) = w(2i) \\ 
 r^0(i) = w(2i)
\end{array},
\quad \text{for } i\geq 0. 
 \end{equation}
With these choices of parameters, at any site $x$ to the left of the origin the sequence of left/right steps by the random walk on successive visits to $x$ has the same distribution as the sequence of blue/red draws from the generalized P\'olya urn process $\{(\mathfrak{B}^-_n, \mathfrak{R}^-_n)\}_{n\geq 0}$. Similarly, the process $\{(\mathfrak{B}^+_n, \mathfrak{R}^+_n)\}_{n\geq 0}$ corresponds to left/right steps at sites to the right of the origin and $\{(\mathfrak{B}^0_n, \mathfrak{R}^0_n)\}_{n\geq 0}$ corresponds to left/right steps at the origin.  

When considering any of the urn models described above, we let $\tau_k^\mathfrak{B^*}$ (or $\tau_k^\mathfrak{R^*}$) be the number of trials until a blue (or red) ball is selected for the $k$-th time. Namely, setting $\tau_0^\mathfrak{B^*} = \tau_0^{\mathfrak{R^*}}=0$, we have for $k\geq 1$ that 
\begin{equation}\label{taub}
\tau_k^\mathfrak{B^*} = \inf\{ n> \tau_{k-1}^\mathfrak{B^*}:  \mathfrak{B}^*_n = \mathfrak{B}^*_{n-1} + 1 \} 
\  \text{ and } \ 
\tau_k^\mathfrak{R^*} = \inf\{ n> \tau_{k-1}^{\mathfrak{R}^*}:  \mathfrak{R}^*_n = \mathfrak{R}^*_{n-1} + 1 \}. 
\end{equation}
We will also let $\D^*_n$ be the difference between the number of red balls and the number of blue balls after the first $n$ draws from the urn, that is, 
\begin{equation}\label{discr}
 \D^*_n = \mathfrak{R}^*_n - \mathfrak{B}^*_n. 
\end{equation}
For any $n \geq 1$, we denote $\mathcal{F}_n^{\mathfrak{B},\mathfrak{R}}=\sigma((\mathfrak{B}_i,\mathfrak{R}_i),i \leq n)$. 

\section{Extension of results about ${\D}^*$ to the general weight function for PSR walks}\label{sec:ext}

In this section we extend the results proved in \cite[Section 4]{KMP23} for
$w(n)=(n+1)^{-\alpha}$ to the more general form in \eqref{generalw} and without the assumption that $w$ is monotone.

The results below hold for all three of the discrepancy urn processes
$\mathfrak{D}^*$ with $* \in \{+,-,0\}$, and, with the exception of
Proposition \ref{proplimit}, the statements are the same and the
proofs are essentially unchanged in all cases. Thus, for convenience
of notation, for all results below other than Proposition
\ref{proplimit} we will drop the superscript indicating which urn
process is being considered and will give the proofs for the urn
process $\mathfrak{D}^+$ only.

\begin{lemma}[\cite{KMP23}, Lemma 4.1, Remark 4.2] \label{Lemma1}
  There exist constants $C_1,c_1>0$ such that for any integers
  $n,m \geq 1$, 
    \begin{equation}\label{DiscrepTail}
     \P(|\D_{\tau_{n}^{\B}}| \ge m )\le C_1 e^{\frac{-c_1 m^2}{m\vee n}} \quad\text{and} \quad  \P(|\D_{n}| \ge m ) \leq C_1 e^{\frac{-c_1 m^2}{m\vee n}}.
    \end{equation}
\end{lemma}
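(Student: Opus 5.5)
Our plan is to use Rubin's construction and compare the times of the blue and red marks. Concretely, for the urn $\D^+$ we have $b(i)=w(2i+1)$ and $r(i)=w(2i)$. Set $T^B_k=\sum_{i=1}^k B_i$ and $T^R_k=\sum_{i=1}^k R_i$. The event $\{\D_{\tau_n^\B}\ge m\}$ says that at least $n+m$ red marks occur before the $n$-th blue mark, i.e. $T^R_{n+m}<T^B_n$. Similarly, $\{\D_{\tau_n^\B}\le -m\}$ says that the $n$-th blue mark precedes the $(n-m)$-th red mark (for $m\le n$; otherwise the event is empty), i.e. $T^B_n<T^R_{n-m}$. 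So the first bound reduces to tail estimates for differences of sums of independent exponentials.

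\textbf{Means and deviations.} We have $E[T^B_k]=\sum_{i<k}w(2i+1)^{-1}$ and $E[T^R_k]=\sum_{i<k}w(2i)^{-1}$. By \eqref{generalw}, $w(j)^{-1}=j^\alpha+2Bj^{\alpha-1}+O(j^{\alpha-2})$, so $w(2i+1)^{-1}-w(2i)^{-1}=O(i^{\alpha-1})$ for large $i$. Hence
\[
E[T^B_n]-E[T^R_n]=O(n^\alpha)
\]
(with constants absorbing the finitely many small $i$). By contrast, $E[T^R_{n+m}]-E[T^R_n]\ge c\,m(n+m)^\alpha$, using that $w(j)^{-1}\ge c(j+1)^\alpha$ for all $j$, which follows from positivity and the asymptotics. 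For $m$ larger than a fixed constant $M_0$, this gap dominates the $O(n^\alpha)$ mean discrepancy. We then bound $P(T^R_{n+m}-T^B_n<0)$ by the probability that the centered sum deviates by more than $\tfrac12 c\,m(n+m)^\alpha$. The summands are centered exponentials with rates $w(j)$, so their variances are $w(j)^{-2}\le C(j+1)^{2\alpha}$. A Bernstein/Chernoff bound (exponentials are sub-exponential with scale $w(j)^{-1}\le C(n+m)^\alpha$) gives
\[
\exp\!\Big(-c\min\Big\{\frac{m^2(n+m)^{2\alpha}}{(n+m)^{1+2\alpha}},\ \frac{m(n+m)^\alpha}{(n+m)^\alpha}\Big\}\Big)=\exp\!\big(-c\,m^2/(m\vee n)\big).
\]
The case $T^B_n<T^R_{n-m}$ is handled symmetrically, with gap $\sum_{n-m\le i<n} w(\cdot)^{-1}\ge c\,m n^{\alpha}$. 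Small $m\le M_0$ are absorbed by enlarging $C_1$. No monotonicity of $w$ is used anywhere; only the two-sided bound $c(j+1)^\alpha\le w(j)^{-1}\le C(j+1)^\alpha$ and the first-order asymptotic difference are needed.

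\textbf{Second bound.} We deduce it from the first. At time $n$ the number of blue draws is $k=\B_n\le n$. If $|\D_n|\ge m$, consider the two cases. If $\B_n\ge n/4$, then we are between $\tau^\B_k$ and $\tau^\B_{k+1}$, and $\D$ only increases in between, so $\D_n\ge m$ with $k\ge 1$ gives $\D_{\tau^\B_{k+1}}\ge m-1$. Likewise $\D_n\le -m$ gives $\D_{\tau^\B_k}\le -m$. Then a union bound over $k\le n$ gives $nC_1e^{-c_1(m-1)^2/((m-1)\vee n)}$. If $\B_n<n/4$, then $\D_n\ge n/2$, so $\D$ at the next blue time is large relative to $\B$, again controlled by the first bound with index $k+1\le n$ and discrepancy $\ge n/2\wedge m$. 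The polynomial factor $n$ is absorbed: since $m\le n$ necessarily, if $m^2/n\ge \log$-scale we reduce $c_1$, and otherwise the bound is trivial after enlarging $C_1$. One must be careful here, because for $m\ll\sqrt{n\log n}$ the union bound fails. The cleaner route is therefore to repeat the Rubin argument directly: $\D_n\ge m$ implies that for $k=\B_n$ we have $T^R_{k+m}<T^B_{k+1}$, and it suffices to bound this uniformly in $k\le n$ via a maximal (Doob/Kolmogorov-type exponential martingale) inequality for $k\mapsto T^R_{k+m}-T^B_{k+1}$ minus its drift.

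The main obstacle is exactly this uniformity in $k$ without losing a factor of $n$. I would handle it with the exponential supermartingale $\exp(\lambda(\text{centered sum}))$ and Doob's inequality, choosing $\lambda\asymp m/((m\vee n)(n+m)^\alpha)$. The inhomogeneous scales $w(j)^{-1}$ make this only a bookkeeping issue, since all scales are at most $C(n+m)^\alpha$ and the drift gap grows at least like $m k^{\alpha}$ for $k\ge m$. The regime $k<m$ is handled by comparison with scale $m^\alpha$.
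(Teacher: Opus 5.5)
\textbf{First bound.} Your argument is sound and is essentially the paper's: Rubin's construction, a mean gap of order $m(n+m)^\alpha$ that beats the $O(n^\alpha)$ mean discrepancy, and an exponential concentration bound for independent exponentials. The differences are cosmetic. The paper replaces $R_j$, $j>n$, by exponentials with the largest rate in the window, so all scales are of order $n^\alpha$. It uses $\sum_{i<n}(w(2i+1)^{-1}-w(2i)^{-1})\sim 2^{\alpha-1}n^\alpha$ to get negative drift for every $m\ge 1$, and it treats $n\le n_0$ separately. Your global bound $c(j+1)^\alpha\le w(j)^{-1}\le C(j+1)^\alpha$ together with $m\ge M_0$ avoids all three steps. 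One small correction: $\{\D_{\tau_n^\B}\le -m\}=\{T^B_n<T^R_{n-m+1}\}$, not $T^R_{n-m}$, which is harmless.

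\textbf{Second bound.} Here the proposal has a genuine gap. You rightly see that a union bound over the random index $k=\B_n$ loses a factor $n$, which cannot be absorbed when $\sqrt n\lesssim m\ll\sqrt{n\log n}$. The fix you sketch does not work as set up, however. With a single $\lambda\asymp m/((m\vee n)(n+m)^\alpha)$ for all $k\le n$, Doob's inequality only gives $\exp(-\min_k(\lambda\mu_k-C\lambda^2V_k))$, where $V_k\asymp(k+m)^{2\alpha+1}$. For $m\le k\ll n$ the drift is only $\mu_k\asymp mk^\alpha$, so $\lambda\mu_k\asymp m^2k^\alpha/n^{1+\alpha}\ll m^2/n$. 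For example, at $m=\sqrt n$ and $k=m$ the exponent tends to $0$. This can be repaired: use your case split to reduce to $k\in[n/4,n/2]$, where all scales are comparable and one $\lambda$ suffices, or peel dyadically in $k$. But you do not carry this out.

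More to the point, no maximal inequality is needed. The missing observation is deterministic. Since $\tau_k^\B=\B_{\tau_k^\B}+\mathfrak{R}_{\tau_k^\B}=2k+\D_{\tau_k^\B}$, the times $2k$ and $\tau_k^\B$ are exactly $|\D_{\tau_k^\B}|$ steps apart. Because $\D$ moves by $\pm1$ at each step, this gives $|\D_{2k}|\le 2|\D_{\tau_k^\B}|$. Combine this with $|\D_{2k+1}-\D_{2k}|\le 1$ and apply the first bound at the \emph{deterministic} index $k=\lfloor n/2\rfloor$. After adjusting $C_1,c_1$, this yields the second bound, and it is exactly how the paper proceeds.
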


\begin{proof}
  Since it was noted in \cite[Remark 4.2]{KMP23} that
  $|\D_{2n}| \leq 2 |\D_{\tau_n^{\B}}|$, the second statement in
  \eqref{DiscrepTail} follows immediately from the first (after
  adjusting the constant $c_1$), and thus we will only prove the first
  statement in \eqref{DiscrepTail}.  We give only the proof for
  $\P(\mathfrak{D}_{\tau_n^{\mathfrak{B}}} \geq m)$ since the
  proof for $\P(\mathfrak{D}_{\tau_n^{\mathfrak{B}}} \leq -m)$
  is similar. Our proof is based on the proof of Lemma 4.1 in
  \cite{KMP23}, but we have to make changes to accommodate the greater
  generality of $w$, including the fact $w$ is not necessarily
  non-increasing anymore. Remembering Rubin's construction, let
  $(R_j')_{j \geq 1}$ be a sequence of independent exponential random
  variables, independent from $(B_i)_{i \geq 1}$, so that if
  $j \leq n$, $R_j'$ has parameter $r(j-1)$ and if $j > n$, $R_j'$ has
  parameter $\max_{n+1 \leq \ell \leq n+m} r(\ell-1)$.  Then
\begin{equation}\label{eq_lem_4.1}
 \P(\mathfrak{D}_{\tau_n^{\mathfrak{B}}} \geq m) = \P\left(\sum_{i=1}^n B_i \geq \sum_{i=1}^{n+m} R_i\right) \leq \P\left(\sum_{i=1}^n B_i-\sum_{i=1}^{n+m} R_i' \geq 0 \right). 
\end{equation}
We want to study 
\[
 \mathbb{E}\left(\sum_{i=1}^n B_i-\sum_{i=1}^{n+m} R_i'\right)=\sum_{i=0}^{n-1} \left(\frac{1}{w(2i+1)}-\frac{1}{w(2i)}\right)-m \min_{n\leq j \leq n+m-1}\frac{1}{w(2j)}. 
\]
When $n$ is large enough, for all $i \geq n$ we have $\frac{1}{w(i)} \geq \frac{3}{4}i^\alpha$, and 
\begin{equation}\label{Esumdiff}
 \mathbb{E}\left(\sum_{i=1}^n B_i-\sum_{i=1}^{n+m} R_i'\right) \leq \sum_{i=0}^{n-1} \left(\frac{1}{w(2i+1)}-\frac{1}{w(2i)}\right)-\frac{3}{4}m(2n)^\alpha. 
\end{equation}
To evaluate the asymptotics of the sum on the right, note that 
\[
  \frac{1}{w(2i+1)}-\frac{1}{w(2i)} = \left( (2i+1)^\alpha -
    (2i)^\alpha \right) + 2B \left( (2i+1)^{\alpha-1} -
    (2i)^{\alpha-1} \right) + O\left( i^{\alpha-2} \right).
\]

Moreover, for any $\beta>0$ we have
\begin{align*}
  \sum_{i=0}^{n-1}((2i+1)^\beta-(2i)^\beta)
  &= 1 +  \sum_{i=1}^{n-1}(2i)^\beta\left(\left(1+\frac{1}{2i}\right)^\beta-1\right) = 1 +  \sum_{i=1}^{n-1}(2i)^\beta\left(\frac{\beta}{2i}+O\left(i^{-2}\right)\right) \\
  &=1 + 2^{\beta-1} \sum_{i=1}^{n-1}\left(\beta i^{\beta-1}+O\left(i^{\beta-2}\right)\right) \sim 2^{\beta-1} n^{\beta}, 
\end{align*}
and for $\beta \leq 0$, the same sum starting from $i=1$ is uniformly bounded in absolute value. Therefore, it follows that 
\begin{equation}\label{eq_seriesDL}
 \sum_{i=0}^{n-1} \left( \frac{1}{w(2i+1)} - \frac{1}{w(2i)}  \right) \sim 2^{\alpha-1} n^{\alpha},
\end{equation}
and, thus, for $n$ sufficiently large and all $m\geq 1$
\[
 \mathbb{E}\left(\sum_{i=1}^n B_i-\sum_{i=1}^{n+m} R_i'\right) \leq \left(\frac54 -\frac32 m\right)2^{\alpha-1} n^{\alpha}\le -2^{\alpha-3} m n^\alpha. 
\]

The last inequality and \eqref{eq_lem_4.1} yield 
\[
 \P(\mathfrak{D}_{\tau_n^{\mathfrak{B}}} \geq m) \leq \P\left(\sum_{i=1}^n \frac{B_i-\mathbb{E}(B_i)}{n^\alpha}-\sum_{i=1}^{n+m} \frac{R_i'-\mathbb{E}(R_i')}{n^\alpha} \geq 2^{\alpha-3}m \right).
\]
Note that if $X \sim \mathrm{Exp}(\lambda)$, $\lambda>0$, $t\in\mathbb{R}$, then
$\mathbb{E}(e^{t(X-\mathbb{E}(X))})\leq e^{t^2/\lambda^2}$ for
  all $|t/\lambda|=|t|\mathbb{E}(X)$ small enough. The expectations of
  $(B_i)_{1 \leq i \leq n}$, $(R_i')_{1 \leq i \leq n+m}$ are bounded
  above by
  $\max_{0 \leq \ell \leq 2n } \frac{1}{w(\ell)}\le \bar C_1n^\alpha$ for
  some constant $\bar C_1$. Hence, there exist $t_0>0$ and $g>0$ such that for
  all $|t| \leq t_0$,
\[
 \max_{1 \leq i \leq n}\mathbb{E}\left(e^{t\frac{B_i-\mathbb{E}(B_i)}{n^\alpha}}\right), \max_{1 \leq i \leq n+m}\mathbb{E}\left(e^{t\frac{\mathbb{E}(R_i')-R_i'}{n^\alpha}}\right) \leq e^{\frac{1}{2}gt^2}.
\]
This allows us to use \cite[Theorem III.15]{pSOIRV} and conclude
that there is a constant $\bar c_1>0$ and $n_0\in\N$ such that
$\P(\mathfrak{D}_{\tau_n^{\mathfrak{B}}} \geq m) \leq e^{-\bar
  c_1 \frac{m^2}{m \vee n}}$ for all $n> n_0$ and all $m \geq 1$.

Finally, let $n\le n_0$. In the spirit of \eqref{eq_lem_4.1} we 
define $(R_i')_{i \geq 1}$ to be independent exponential random
variables, independent from $(B_i)_{i \geq 1}$, and such that  $R_j'\sim \mathrm{Exp}(r(j-1))$ for
$j \leq n$ and $R_j'\sim \mathrm{Exp}(\lambda)$ for $j>n$ where $\lambda=\max_{\ell\in\mathbb{N}} w(\ell)$. Then for
$t>0$ small,
\begin{align*}
 \P(\mathfrak{D}_{\tau_n^{\mathfrak{B}}} \geq m) 
 &\leq \P\left(\sum_{i=1}^n B_i-\sum_{i=1}^{n+m} R_i' \geq 0 \right)
 =\P\left(-\sum_{i=n+1}^{n+m}R_i' \geq \sum_{i=1}^n (R_i'-B_i)\right)\\
 &\leq \mathbb{E}\left(e^{-t\sum_{i=1}^n (R_i'-B_i)}\mathbb{E}\left(e^{-t\sum_{i=n+1}^{n+m}R_i'}\right)\right) \leq \max_{1 \leq k \leq n_0}\mathbb{E}\left(e^{-t\sum_{i=1}^k (R_i'-B_i)}\right)\left(\frac{\lambda}{\lambda+t}\right)^m. 
\end{align*}
Therefore, there exist constants $\hat C_1,\hat c_1\in(0,\infty)$ such that $\P(\mathfrak{D}_{\tau_n^{\mathfrak{B}}} \geq m) \leq \hat C_1 e^{-\hat c_1 m} \leq \hat C_1 e^{-\hat c_1 \frac{m^2}{m \vee n}}$. This completes the proof. 
\end{proof}

Obtaining estimates on the
variance of $\D_m-\D_n$ directly is not optimal because the increments
$\D_{i+1}-\D_i$ are too strongly correlated. On the other hand, for
$\delta>0$ fixed and $n$ large, the process
$\left(\frac{\D_{\fl{tn}}}{\sqrt{n}} t^\alpha \right)_{t\geq \delta}$
is approximately a martingale, and this observation underlies the
following lemma.

 \begin{lemma}[\cite{KMP23}, Lemma 4.3] \label{lemkey-var} For $\delta\in (0,2)$ fixed, there
   exists a constant $C_2$ such that for $n$ sufficiently large
   and $\delta n \leq k \leq m \leq 2n$ we have
\[
\left| \Var\left( \D_m \left( \frac{m}{n} \right)^\alpha - \D_k \left( \frac{k}{n}\right)^\alpha \right) - n \int_{k/n}^{m/n} u^{2\alpha} \, du \right| 
\leq C_2 \sqrt{n}\log^2 n. 
\]
\end{lemma}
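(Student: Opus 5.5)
We use the drift structure of the urn. For $\D=\D^+$, conditionally on $\mathcal{F}_i^{\B,\R}$ the increment $\D_{i+1}-\D_i$ equals $\pm1$. Its conditional mean is
\[
\mathbb{E}(\D_{i+1}-\D_i\mid\mathcal{F}_i^{\B,\R})=\frac{r(\R_i)-b(\B_i)}{r(\R_i)+b(\B_i)}.
\]
We use $\R_i=(i+\D_i)/2$, $\B_i=(i-\D_i)/2$, $r^+(j)=w(2j)$ and $b^+(j)=w(2j+1)$. By \eqref{generalw}, $w(2\R_i)=(i+\D_i)^{-\alpha}(1-2B/i+\dots)$ and $w(2\B_i+1)=(i-\D_i+1)^{-\alpha}(1-2B/i+\dots)$. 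The $B$-corrections cancel to leading order, and we obtain
\[
\mathbb{E}(\D_{i+1}-\D_i\mid\mathcal{F}_i)=-\frac{\alpha\D_i}{i}+\frac{\alpha}{2i}+E_i,
\]
with $|E_i|\le C\bigl(|\D_i|^2/i^2+|\D_i|^3/i^3+1/i^{2}\bigr)$ on the event $|\D_i|\le i/2$. Off that event we simply bound the drift by $2$, and by Lemma~\ref{Lemma1} this event has probability $e^{-cn}$ for $i\ge\delta n$.

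The point of this error bound is that the constant $B$ only enters through $O(i^{-2})$ terms, and that no monotonicity of $w$ is needed. This is exactly where the general weight requires care compared with \cite{KMP23}.

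Set $M_i=\D_i\,(i/n)^\alpha$. Then $(i+1)^\alpha-i^\alpha=\alpha i^{\alpha-1}+O(i^{\alpha-2})$, and the term $\alpha\D_i/i$ cancels the drift. Writing $\D_{i+1}-\D_i=\xi_{i+1}+\text{drift}_i$ with $\xi$ a martingale difference, we get
\[
M_{i+1}-M_i=\Bigl(\frac{i+1}{n}\Bigr)^\alpha\xi_{i+1}+\rho_i,
\]
with $|\rho_i|\le C\bigl(i^{-1}+|E_i|+|\D_i|\,i^{-2}\bigr)$ for $i\in[\delta n,2n]$.

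Summing from $k$ to $m$ gives $M_m-M_k=N+R$, where $N=\sum_{i=k}^{m-1}((i+1)/n)^\alpha\xi_{i+1}$ is a martingale sum and $R=\sum\rho_i$. By Lemma~\ref{Lemma1}, $\mathbb{E}\D_i^2\le Ci$ and $\mathbb{E}\D_i^4\le Ci^2$. Hence $\Var(R)\le\mathbb{E}R^2\le C\bigl(\sum_i(\mathbb{E}\rho_i^2)^{1/2}\bigr)^2=O(\log^2 n)$, since $(\mathbb{E}\rho_i^2)^{1/2}=O(1/i)$ and the sum runs over $i\in[\delta n,2n]$. (The $\log$ is generous; a crude bound suffices.)

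For the martingale part, orthogonality of increments gives $\Var(N)=\sum((i+1)/n)^{2\alpha}\,\mathbb{E}\xi_{i+1}^2$. Since $\xi_{i+1}^2=1-(\text{drift}_i)^2$ and $\mathbb{E}(\text{drift}_i)^2=O(1/i)$, we get
\[
\Var(N)=\sum_{i=k}^{m-1}\Bigl(\frac{i+1}{n}\Bigr)^{2\alpha}\bigl(1+O(1/i)\bigr)=n\int_{k/n}^{m/n}u^{2\alpha}\,du+O(1),
\]
using a Riemann-sum comparison (the integrand is bounded on $[\delta,2]$).

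Finally, by Cauchy–Schwarz,
\[
|\Var(N+R)-\Var N|\le 2\sqrt{\Var N\,\Var R}+\Var R=O(\sqrt n\log n),
\]
which gives the claim with the stated $\sqrt n\log^2 n$ error.

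The main obstacle is the drift expansion of the first step with the correct error $E_i$, which must hold uniformly in $\D_i$ in the bulk, together with handling the exceptional region $|\D_i|>i/2$. For the expansion, the first thing to try is to write $w(j)^{-1}=j^\alpha(1+2B/j+\eta_j)$ with $|\eta_j|\le C/j^2$ and to Taylor expand the ratio directly. The exceptional region contributes only $O(e^{-cn})$ thanks to the tail bound in Lemma~\ref{Lemma1}, with the drift bounded by $2$ there.
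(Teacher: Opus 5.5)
Your proposal is correct and follows the paper's route: you prove the drift estimate $\mathbb{E}(\D_{i+1}-\D_i\mid\mathcal{F}_i^{\mathfrak{B},\mathfrak{R}})=-\alpha\D_i/i+O\bigl((\D_i^2+i)/i^2\bigr)$, which is \eqref{eq43}, and then use that $\D_i(i/n)^\alpha$ is nearly a martingale, the argument the paper imports from \cite{KMP23}; your Minkowski-type $L^2$ bookkeeping actually yields an $O(\sqrt{n})$ error. The finer expansion you emphasize is not needed: the paper simply absorbs the $+1$ shift and the $B$-corrections into an $O(1/i)$ error (after cancellation those corrections are really $O\bigl((|\D_i|+1)/i^2\bigr)$ rather than $O(i^{-2})$, which is harmless), and your own bound on $\rho_i$ keeps an $i^{-1}$ term anyway.
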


\begin{proof}
We recall that $\mathcal{F}_i^{\mathfrak{B},\mathfrak{R}} := \sigma\left( (\mathfrak{B}_j, \mathfrak{R}_j), \, j\leq i \right)$. The key estimate that needs to be proved is 
 \begin{equation}\label{eq43}
  \mathbb{E}(\mathfrak{D}_{i+1}-\mathfrak{D}_i\mid \mathcal{F}_i^{\mathfrak{B},\mathfrak{R}})=-\alpha\frac{\mathfrak{D}_i}{i}+\varepsilon_i,\text{ with }|\varepsilon_i| \leq \bar C_2 \frac{\mathfrak{D}_i^2+i}{i^2} 
 \end{equation}
 for some constant $\bar C_2$, which was equation (43) in \cite{KMP23}. From \eqref{eq43} the proof of Lemma \ref{lemkey-var} follows exactly as in \cite{KMP23}, and so we will only prove \eqref{eq43} in our more general setting. 
 
First of all, note that since $|\mathfrak{D}_{i+1} - \mathfrak{D}_i|\leq 1$ and $|\mathfrak{D}_i| \leq i$ then it follows that we always have
$\left| \mathbb{E}(\mathfrak{D}_{i+1}-\mathfrak{D}_i|\mathcal{F}_i^{\mathfrak{B},\mathfrak{R}}) + \alpha \frac{\mathfrak{D}_i}{i} \right| \leq 1 + \alpha$. Moreover, since $\frac{\mathfrak{D}_i^2 + i}{i^2} \geq \frac{1}{4}$ when $|\mathfrak{D}_i| \geq i/2$ then we can conclude that \eqref{eq43} holds with $\bar C_2=4(1+\alpha)$ when $|\mathfrak{D}_i| \geq \frac{i}{2}$. 
 
To prove \eqref{eq43} for  $|\mathfrak{D}_i| \leq \frac{i}{2}$, first note that 
\begin{align}
 \mathbb{E}(\mathfrak{D}_{i+1}-\mathfrak{D}_i\mid \mathcal{F}_i^{\mathfrak{B},\mathfrak{R}}) 
 &= \frac{w(2\mathfrak{R}_i)-w(2\mathfrak{B}_i+1)}{w(2\mathfrak{R}_i)+w(2\mathfrak{B}_i+1)} 
= \frac{w(i+\mathfrak{D}_i)-w(i-\mathfrak{D}_i+1)}{w(i+\mathfrak{D}_i)+w(i-\mathfrak{D}_i+1)}  \nonumber 
  \\
 &=\frac{(i-\mathfrak{D}_i+1)^\alpha(1+O(\frac{1}{i-\mathfrak{D}_i+1}))-(i+\mathfrak{D}_i)^\alpha(1+O(\frac{1}{i+\mathfrak{D}_i}))}{(i-\mathfrak{D}_i+1)^\alpha(1+O(\frac{1}{i-\mathfrak{D}_i+1}))+(i+\mathfrak{D}_i)^\alpha(1+O(\frac{1}{i+\mathfrak{D}_i}))} \nonumber \\
  &=\frac{(1-\frac{\mathfrak{D}_i-1}{i})^\alpha(1+O(\frac{1}{i}))-(1+\frac{\mathfrak{D}_i}{i} )^\alpha(1+O(\frac{1}{i}))}{(1-\frac{\mathfrak{D}_i-1}{i})^\alpha(1+O(\frac{1}{i}))+(1+\frac{\mathfrak{D}_i}{i})^\alpha(1+O(\frac{1}{i}))},  \label{EDidiff}
\end{align}
where in the last equality we used that $O\left( \frac{1}{i-\mathfrak{D}_i+1} \right) = O\left( \frac{1}{i+\mathfrak{D}_i} \right) = O\left(\frac{1}{i} \right)$ when $|\mathfrak{D}_i| \leq \frac{i}{2}$. 
From Taylor expansions
\begin{align*}
 \left( 1- \frac{\mathfrak{D}_i-1}{i} \right)^\alpha
 &= 1-\alpha \frac{\mathfrak{D}_i-1}{i} + O\left( \left( \frac{\mathfrak{D}_i-1}{i} \right)^2 \right) 
 = 1 - \alpha \frac{\mathfrak{D}_i}{i} + O\left( \frac{\mathfrak{D}_i^2 + i}{i^2} \right),\\
\left( 1 + \frac{\mathfrak{D}_i}{i} \right)^\alpha
 &= 1+\alpha \frac{\mathfrak{D}_i}{i} + O\left( \frac{\mathfrak{D}_i^2}{i^2} \right),
\end{align*}
and \eqref{EDidiff} we get that for $|\mathfrak{D}_i| \leq \frac{i}{2}$
\[
 \mathbb{E}(\mathfrak{D}_{i+1}-\mathfrak{D}_i|\mathcal{F}_i^{\mathfrak{B},\mathfrak{R}}) = \frac{-2\alpha\frac{\mathfrak{D}_i}{i}+O(\frac{\mathfrak{D}_i^2+i}{i^2})}{2+O(\frac{\mathfrak{D}_i^2+i}{i^2})} = -\alpha\frac{\mathfrak{D}_i}{i}+O\left(\frac{\mathfrak{D}_i^2+i}{i^2}\right).
\]
This completes the proof of \eqref{eq43}. 
\end{proof}

The following two corollaries follow from Lemmas \ref{Lemma1} and \ref{lemkey-var} exactly as in \cite{KMP23}.

\begin{corollary}[\cite{KMP23}, Corollary 4.4]\label{cor-VDn1}
 $\lim_{n\to\infty} n^{-1} \Var(\mathfrak{D}_n) = (2\alpha+1)^{-1}$. 
\end{corollary}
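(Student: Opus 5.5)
The idea is to use Lemma \ref{lemkey-var} along a geometric sequence of times and then show that the early part of the urn history is negligible. Fix $\delta\in(0,1)$ and take $k=\lceil \delta n\rceil$, $m=n$ in Lemma \ref{lemkey-var}. This gives
\[
\Var\left(\D_n - \D_k\left(\tfrac{k}{n}\right)^\alpha\right) = n\int_{\delta}^{1}u^{2\alpha}\,du + O(\sqrt n\log^2 n) = \frac{n(1-\delta^{2\alpha+1})}{2\alpha+1}+O(\sqrt n \log^2 n).
\]
By Lemma \ref{Lemma1}, $\Var(\D_k)\le \mathbb{E}(\D_k^2)\le C k$ for a constant $C$ independent of $k$; this follows from integrating the Gaussian tail $C_1e^{-c_1m^2/k}$ for $m\le k$. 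Hence $\Var(\D_k (k/n)^\alpha)\le C\delta^{2\alpha+1}n$.

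Next I write $\D_n = X + Y$ with $X=\D_n-\D_k(k/n)^\alpha$ and $Y=\D_k(k/n)^\alpha$. Minkowski's inequality for standard deviations gives
\[
\left|\sqrt{\Var(\D_n)}-\sqrt{\Var(X)}\right|\le \sqrt{\Var(Y)}\le \sqrt{C\delta^{2\alpha+1}n}.
\]
Dividing by $\sqrt n$ and letting $n\to\infty$ yields
\[
\limsup_{n\to\infty}\left|\sqrt{n^{-1}\Var(\D_n)}-\sqrt{\tfrac{1-\delta^{2\alpha+1}}{2\alpha+1}}\right|\le\sqrt{C\delta^{2\alpha+1}}.
\]
Finally, letting $\delta\downarrow0$ gives $n^{-1}\Var(\D_n)\to(2\alpha+1)^{-1}$.

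The constant $C_2$ in Lemma \ref{lemkey-var} depends on $\delta$, but this causes no difficulty: $\delta$ is fixed before $n\to\infty$, so the error $C_2(\delta)\sqrt n\log^2 n/n$ vanishes for each fixed $\delta$. The one point that needs care is the uniform second-moment bound $\mathbb{E}(\D_k^2)\le Ck$ with $C$ independent of $\delta$. I obtain it by summing the tail bound of Lemma \ref{Lemma1}:
\[
\mathbb{E}(\D_k^2)\le \sum_{m\ge1}(2m)\,C_1e^{-c_1m^2/(m\vee k)}.
\]
For $m\le k$ the summands give $O(k)$, and for $m>k$ the terms are bounded by $2mC_1e^{-c_1m}$, whose sum is $O(1)$.
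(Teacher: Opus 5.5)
Your argument is correct and is essentially the proof the paper defers to in \cite{KMP23}: apply Lemma \ref{lemkey-var} with $m=n$, $k=\lceil\delta n\rceil$, control the initial piece $\D_k(k/n)^\alpha$ through the second-moment bound $\mathbb{E}(\D_k^2)\le Ck$ obtained from Lemma \ref{Lemma1}, and let $\delta\downarrow 0$. The only harmless slip is that $\Var(\D_k(k/n)^\alpha)\le Cn(k/n)^{2\alpha+1}$ with $k/n\in[\delta,\delta+1/n)$, so your bound $C\delta^{2\alpha+1}n$ holds only after replacing $\delta$ by $\delta+1/n$; this does not affect the $\limsup$.
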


\begin{corollary}[\cite{KMP23}, Corollary 4.5]\label{cor-VDn}
There exists a constant $C_3>0$ such that for all $n$ sufficiently large and $m \in [n,2n]$, 
\begin{equation}\label{VDmDn}
\left| \Var(\D_m - \D_n) - (m-n) \right| \leq   C_3\sqrt{n}\log^2 n  +  C_3 (m-n)^2/n.
\end{equation}
\end{corollary}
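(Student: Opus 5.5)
We derive the estimate from Lemma \ref{lemkey-var} and Lemma \ref{Lemma1} by writing $\D_m-\D_n$ as a combination of the ``approximate martingale'' increment and a remainder. Write
\[
\D_m - \D_n = \Big(\D_m\big(\tfrac{m}{n}\big)^\alpha - \D_n\Big) - \D_m\Big(\big(\tfrac{m}{n}\big)^\alpha - 1\Big) =: X - Y .
\]
Then
\[
\Var(\D_m-\D_n) = \Var(X) - 2\,\mathrm{Cov}(X,Y) + \Var(Y).
\]

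\textbf{The term $\Var(X)$.} Apply Lemma \ref{lemkey-var} with $\delta=1$, $k=n$, $m\in[n,2n]$. This gives
\[
\Var(X) = n\int_1^{m/n}u^{2\alpha}\,du + O(\sqrt n\log^2 n).
\]
Since $u^{2\alpha} = 1 + O(u-1)$ on $[1,2]$, we get
\[
n\int_1^{m/n}u^{2\alpha}\,du = (m-n) + O\big((m-n)^2/n\big).
\]

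\textbf{The term $\Var(Y)$.} We have $(m/n)^\alpha - 1 = O((m-n)/n)$. By Lemma \ref{Lemma1}, integrating the tail bound gives $\mathbb{E}(\D_m^2)\le C m \le 2Cn$ (alternatively, use Corollary \ref{cor-VDn1}). Hence
\[
\Var(Y) \le C(m-n)^2/n .
\]

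\textbf{The covariance term.} This is the main obstacle. Cauchy--Schwarz gives only
\[
|\mathrm{Cov}(X,Y)| \le \sqrt{\Var X\,\Var Y} = O\big(\sqrt{m-n}\,(m-n)/\sqrt n\big),
\]
which is not of the required form. We therefore treat the covariance more carefully. Set
\[
M_j := \D_j\,(j/n)^\alpha .
\]
Then $Y = c\,M_m$ with $c = 1 - (n/m)^\alpha = O((m-n)/n)$, so
\[
\mathrm{Cov}(X,Y) = c\big(\Var(M_m) - \mathrm{Cov}(M_n,M_m)\big).
\]
Using the identity $2\,\mathrm{Cov}(M_n,M_m)=\Var M_n+\Var M_m-\Var(M_m-M_n)$, this becomes
\[
\mathrm{Cov}(X,Y) = \tfrac{c}{2}\big(\Var M_m - \Var M_n + \Var(M_m-M_n)\big).
\]
Applying Lemma \ref{lemkey-var} with $k=\delta n$, for a fixed small $\delta$, to both $\Var(M_m - M_{\delta n})$ and $\Var(M_n-M_{\delta n})$ requires control of $\mathrm{Cov}$ with $M_{\delta n}$. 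To avoid that, we use directly the consequence from the proof of Lemma \ref{lemkey-var}, namely the approximate martingale property from \eqref{eq43}. That property gives
\[
\Var M_m - \Var M_n = n\int_{1}^{m/n}u^{2\alpha}\,du + O(\sqrt n\log^2 n) = O(m-n) + O(\sqrt n\log^2n),
\]
and likewise $\Var(M_m-M_n)=O(m-n)+O(\sqrt n\log^2 n)$. Multiplying by $c=O((m-n)/n)$ yields
\[
|\mathrm{Cov}(X,Y)| \le C(m-n)^2/n + C\,\tfrac{m-n}{n}\sqrt n\log^2 n \le C(m-n)^2/n + C\sqrt n\log^2 n .
\]

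\textbf{Conclusion.} Combining the three terms gives
\[
\big|\Var(\D_m-\D_n)-(m-n)\big| \le C_3\sqrt n\log^2 n + C_3(m-n)^2/n ,
\]
which is \eqref{VDmDn}. The delicate point is justifying $\Var M_m-\Var M_n\approx n\int_1^{m/n}u^{2\alpha}\,du$. If this does not follow from Lemma \ref{lemkey-var} as stated, we would rederive it from \eqref{eq43} by expanding $M_{j+1}^2-M_j^2$. The conditional drift of $M_j$ is $O\big((j/n)^\alpha(\D_j^2+j)/j^2\big)$, and Lemma \ref{Lemma1} controls its contribution after summing over $j\in[n,m]$.
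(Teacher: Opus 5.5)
Your proof is correct. It rests on the same natural decomposition $\D_m-\D_n=(M_m-M_n)-cM_m$, with $M_j=\D_j(j/n)^\alpha$ and $c=1-(n/m)^\alpha=O((m-n)/n)$. Where you depart from the paper is in how the cross term is handled. The paper gives no details: it only says the corollary follows from Lemmas \ref{Lemma1} and \ref{lemkey-var} exactly as in \cite{KMP23}.

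You are right that the cross term needs more than the variance statement of Lemma \ref{lemkey-var}. Cauchy--Schwarz gives only $(m-n)^{3/2}/\sqrt n$, which is not dominated by $\sqrt n\log^2 n+(m-n)^2/n$ when $n^{2/3}\ll m-n\ll n$. Splitting off $M_{\delta n}$ and applying Lemma \ref{lemkey-var} with small fixed $\delta$ does not help either: the term $\mathrm{Cov}(M_m-M_n,M_{\delta n})$ reproduces the same loss. So the drift estimate \eqref{eq43}, proved inside Lemma \ref{lemkey-var}, is the real input, and your last paragraph should be the argument rather than a fallback.

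That argument can also be streamlined. Write $\mathrm{Cov}(X,Y)=c\,[\Var(M_m-M_n)+\mathrm{Cov}(M_m-M_n,M_n)]$. Since $M_n$ is $\mathcal{F}_j^{\mathfrak{B},\mathfrak{R}}$-measurable for $j\ge n$, you have $\mathrm{Cov}(M_m-M_n,M_n)=\sum_{j=n}^{m-1}\mathbb{E}\bigl[(M_n-\mathbb{E}M_n)\,\mathbb{E}(M_{j+1}-M_j\mid\mathcal{F}_j^{\mathfrak{B},\mathfrak{R}})\bigr]$. Then:
\begin{enumerate}
\item By \eqref{eq43}, $|\mathbb{E}(M_{j+1}-M_j\mid\mathcal{F}_j^{\mathfrak{B},\mathfrak{R}})|\le C(\D_j^2+j)/n^2$ for $j\in[n,2n]$.
\item By Cauchy--Schwarz and the moment bounds from Lemma \ref{Lemma1}, each summand is $O(n^{-1/2})$.
\item Hence $\mathrm{Cov}(M_m-M_n,M_n)=O((m-n)/\sqrt n)$, and multiplying by $c$ gives an error of order $(m-n)^2/n^{3/2}$.
\end{enumerate}
This avoids the detour through $\Var M_m-\Var M_n$, which is the same computation with the mean terms added.

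Your route buys self-containment within this note. It also makes explicit that the approximate orthogonality of $M_m-M_n$ to $\mathcal{F}_n^{\mathfrak{B},\mathfrak{R}}$ is what controls the intermediate regime. The paper's one-line reference presents the corollary as a consequence of the two lemmas without showing how the cross term is handled.
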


The proof of the next lemma needs a more significant adjustment, since the argument in \cite{KMP23} used the assumption that $w$ was non-increasing.

 \begin{lemma}[\cite{KMP23}, Lemma 4.6] \label{elenatype} There exist
   $M_0, c_4\in (0, \infty) $ such that for any integer $M \ge M_0 $,
 $n\in\N$, and $y\in[0,\sqrt{n}]$
\[
  \P\bigg(\sup_{\tau^{\B}_{Mn} \le i \leq \tau^{\B}_{(M+1)n}} |\D_i -
    \D_{\tau^{\B}_{Mn}}|\ge y\sqrt{n}\bigg) \le \frac{1}{c_4}\,e^{-c_4
    y^2}.
\]
\end{lemma}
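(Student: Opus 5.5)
Since $\D$ changes by at most one per draw and between $\tau^{\B}_{Mn}$ and $\tau^{\B}_{(M+1)n}$ exactly $n$ blue balls are drawn, we can control the supremum by following the walk of red marks against blue marks in Rubin's construction. Work conditionally on the past up to $\tau^{\B}_{Mn}$ (the urn is Markov in $(\B,\R)$) and write $d=\D_{\tau^{\B}_{Mn}}$, so that $\R_{\tau^{\B}_{Mn}}=Mn+d$. By Lemma \ref{Lemma1} applied with $Mn$ blue balls, $|d|\le y\sqrt{n}/4$ except on an event of probability at most $C_1e^{-c_1 y^2/(16M)}$. This bound is not uniform in $M$, however. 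So instead of passing through $d$, we bound the event $\{\sup_i \D_i\ge \D_{\tau^{\B}_{Mn}}+y\sqrt n\}$ directly, conditional on $(\B,\R)$ at time $\tau^{\B}_{Mn}$, uniformly over starting configurations with $|d|$ arbitrary. We treat the upward deviation; the downward one is symmetric with red and blue exchanged.

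\textbf{Reduction to sums of exponentials.} Given the configuration $(Mn, Mn+d)$ at time $\tau^{\B}_{Mn}$, the memorylessness in Rubin's construction lets us restart both clocks. The event that $\D_i-d\ge y\sqrt n$ for some $i$ before the $n$-th further blue ball means that, for some $k\le n$, the $(k+\lceil y\sqrt n\rceil)$-th further red mark precedes the $k$-th further blue mark:
\[
\sum_{j=1}^{k+\lceil y\sqrt n\rceil} R_{Mn+d+j}\le \sum_{j=1}^{k} B_{Mn+j}.
\]
Here the means are $1/w(2(Mn+d+j-1))$ for red and $1/w(2(Mn+j-1)+1)$ for blue. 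The earlier proof compared with a monotone sequence, but $w$ is not monotone, so we cannot do that. We use \eqref{generalw} directly instead: for $M\ge M_0$ all indices lie in $[2Mn-2|d|, 2(M+1)n+2|d|]$.

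\textbf{Handling $d$.} When $d\ge 0$ the red means are at least the blue ones up to $O(n^{\alpha-1}M^{\alpha-1})$ corrections, so the red sum has larger drift. When $d<0$ the red means are smaller by roughly $\alpha (2Mn)^{\alpha-1}\cdot 2|d|$ each, which helps red. Summed over $k\le n$ terms, this gives a drift advantage of about $|d|\, n\,(Mn)^{\alpha-1}$, compared with the main term $y\sqrt n\,(Mn)^\alpha$. The ratio is $|d|/(My\sqrt n)$. By Lemma \ref{Lemma1}, $|d|\lesssim \sqrt{Mn}\,(1+\text{tail})$, so this is $O(y^{-1}M^{-1/2})$, which is harmless for $M\ge M_0$ large. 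This is where $M_0$ enters. On $\{|d|\ge My\sqrt n/8\}$ we pay $C_1e^{-c_1 M y^2/64}$, which is better than needed.

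\textbf{Maximal inequality.} On the good event, normalize by $(2Mn)^\alpha$. The variables become centered exponentials with means $1+O(1/M)$ plus uniformly small drifts, and the drift sum from \eqref{eq_seriesDL}-type estimates is $O(k/M)$, or $O(k\,|d|/(Mn))$. Apply a Doob maximal inequality to the exponential martingale $\exp(t S_k - \tfrac12 g t^2 k)$, where $S_k=\sum_{j\le k}(\tilde B_j-\tilde R_j)$, with $t\asymp y/\sqrt n\le 1$ (using $y\le\sqrt n$, which keeps $t$ in the range where the MGF bound holds). This gives $\P(\max_{k\le n} S_k\ge c\,y\sqrt n)\le e^{-c y^2}$. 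The extra $\lceil y\sqrt n\rceil$ red variables contribute a negative mean of size $y\sqrt n$, and their fluctuations are also Gaussian-controlled, because $y\sqrt n\le n$.

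\textbf{Main obstacle.} The hard part is the uniformity in $M$ together with non-monotone $w$. The bounds on the drift mismatch must come only from the asymptotic expansion \eqref{generalw}, with errors $O(i^{\alpha-1})$ relative to $i^\alpha$ summed over $n$ terms at scale $Mn$, and these must be dominated by $y\sqrt n (Mn)^\alpha$. This works because $y\sqrt n\ge$ const only when $y\ge n^{-1/2}$; small $y$ is trivial after adjusting $c_4$.
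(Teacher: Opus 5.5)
Your route is genuinely different from the paper's. The paper works with the discrete chain $\D$ directly. It restricts to $\mathcal{G}_{y,M,n}=\{|\D_{\tau^{\B}_{Mn}}|\le y\sqrt{Mn}\}$, whose complement has probability at most $C_1e^{-c_1y^2}$ uniformly in $M$ by Lemma \ref{Lemma1} (because $y\sqrt{Mn}\le Mn$). It notes that if $\D$ leaves the band of half-width $y\sqrt n$ before $\tau^{\B}_{(M+1)n}$, it does so within $2n+\lceil y\sqrt n\rceil$ draws. It then uses the one-step formula \eqref{PDidiff} to show that before exit the up-probability is within $k_0y/\sqrt{Mn}$ of $1/2$, sandwiches $\D$ between two biased simple random walks, and finishes with reflection and Hoeffding. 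The condition $M\ge M_0$ is used exactly so that the accumulated bias, of order $y\sqrt n/\sqrt M$, is a small fraction of $y\sqrt n$. That argument is local: it never sums mean mismatches, and non-monotonicity of $w$ plays no role.

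You instead use Rubin's construction, sums of independent exponentials and an exponential-supermartingale maximal inequality, as in the proof of Lemma \ref{Lemma1}. Indexing by the number of further blue draws makes the time window automatic. The price is that you must track cumulative mean differences and control the extra $\lceil y\sqrt n\rceil$ red clocks uniformly in $k$; a maximal inequality for the centered red partial sums up to index $n+\lceil y\sqrt n\rceil\le 2n$ does this. The approach works, but some bookkeeping needs repair.

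\textbf{The $d$-threshold.} Your drift ratio is really $\alpha|d|/(My\sqrt n)$, not $|d|/(My\sqrt n)$. On your good event $|d|<My\sqrt n/8$ it is therefore only bounded by $\alpha/8$, which does not shrink with $M$. For large $\alpha$, the restoring drift from $d\approx -My\sqrt n/8$ is about $\alpha y\sqrt n/8$ over the window, so it alone pushes $\D$ up by more than $y\sqrt n$. The threshold must depend on $\alpha$, e.g.\ $My\sqrt n/(8\alpha)$. Better, use the paper's scale $y\sqrt{Mn}$:
\begin{itemize}
\item It is already uniform in $M$; your non-uniformity worry came only from the too-small scale $y\sqrt n/4$.
\item It gives ratio $O(\alpha/\sqrt M)$, which is where $M_0$ genuinely enters.
\end{itemize}
Your ``$O(y^{-1}M^{-1/2})$'' conflates the typical size of $d$ with the cutoff. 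For the same reason, your opening claim of a bound uniform over arbitrary $d$ is false for upward deviations when $d$ is very negative.

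\textbf{The $d$-independent drift.} After normalizing by $(2Mn)^\alpha$, this drift is $O(k/(Mn))$, not $O(k/M)$, since $1/w(2m)-1/w(2m+1)=O(m^{\alpha-1})$. With $O(k/M)$ summed up to $k=n$, the argument would break for $y\lesssim\sqrt n/M$. Your last paragraph has the correct order, so this is presumably a slip.

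\textbf{The reduction.} It is off by one: the event is that the $(k-1+\lceil y\sqrt n\rceil)$-th further red mark precedes the $k$-th further blue mark.
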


\begin{proof}
Let $\mathcal{G}_{y,M,n}=\{|\mathfrak{D}_{\tau_{Mn}^\mathfrak{B}}| \leq y
\sqrt{Mn}\}$.  Lemma \ref{Lemma1} yields
$\P\left(\mathcal{G}_{y,M,n}^c\right) \leq C_1 e^{-c_1y^2}$, hence it
  is sufficient to show that
\[
 \P\left( \sup_{\tau_{Mn}^\mathfrak{B} \leq i \leq \tau_{(M+1)n}^\mathfrak{B}}|\mathfrak{D}_i-\mathfrak{D}_{\tau_{Mn}^\mathfrak{B}}| \geq y \sqrt{n} \, \Big| \,  \mathcal{G}_{y,M,n} \right) \leq \frac{1}{c_4} e^{-c_4 y^2}, \quad \text{for } M\geq M_0, \, y \in [0,\sqrt{n}]. 
\]
If there exists an $i$ such that
$\tau_{Mn}^\mathfrak{B} \leq i \leq \tau_{(M+1)n}^\mathfrak{B}$ and
$|\mathfrak{D}_i-\mathfrak{D}_{\tau_{Mn}^\mathfrak{B}}| \geq y
\sqrt{n}$, then let $i_0$ be the smallest such $i$. Since
  $|\mathfrak{D}_{i+1}-\mathfrak{D}_i| = 1$, it must be that
  $|\mathfrak{D}_{i_0}-\mathfrak{D}_{\tau_{Mn}^\mathfrak{B}}| \le
  \lceil y \sqrt{n} \rceil$. Noticing that there are at most $n$
  ``down-steps'' of the process $\mathfrak{D}$ between
  $\tau_{Mn}^\mathfrak{B}$ and $i_0$, we conclude that the number of
  ``up-steps'' in the same time interval is at most
  $n+\lceil y \sqrt{n} \rceil$. Therefore,
$i_0 \leq \tau_{Mn}^\mathfrak{B} + 2n+\lceil y \sqrt{n} \rceil$
and we only have to give an appropriate bound
on
\[\P\left(\sup_{\tau_{Mn}^\mathfrak{B} \leq i \leq
      \tau_{Mn}^\mathfrak{B} + 2n+\lceil y \sqrt{n}
      \rceil}|\mathfrak{D}_i-\mathfrak{D}_{\tau_{Mn}^\mathfrak{B}}|
    \geq y \sqrt{n}\, \Big| \ \mathcal{G}_{y,M,n}\right).\] If we let
\[T^+_{y,M,n} =\inf\{i \ge 0\mid
\mathfrak{D}_{\tau_{Mn}^\mathfrak{B}+i} \geq
\mathfrak{D}_{\tau_{Mn}^\mathfrak{B}}+y\sqrt{n}\}\ \text{ and }\ 
T^-_{y,M,n} =\inf\{i \ge 0
\mid\mathfrak{D}_{\tau_{Mn}^\mathfrak{B}+i} \leq
\mathfrak{D}_{\tau_{Mn}^\mathfrak{B}}-y\sqrt{n}\},\] then it is enough
to show that
\begin{equation}\label{PTpm}
 \P\left( T^+_{y,M,n} \wedge T^-_{y,M,n} \leq 2n+\lceil y \sqrt{n} \rceil \mid\mathcal{G}_{y,M,n} \right) \leq \frac{1}{c_4} e^{-c_4 y^2}, \quad \text{for } M\geq M_0,\ y \in [0,\sqrt{n}]. 
\end{equation}
In order to do that, we
study the probability for $\left(\mathfrak{D}_{\tau_{Mn}^\mathfrak{B}+i}\right)_{i\geq 0}$ to go up or down, and will prove that for  $i \leq T^+_{y,M,n} \wedge T^-_{y,M,n}$ it is not too far from $1/2$. It is enough to consider $y \in (1,\sqrt{n}]$ since for $y\in[0,1]$ the inequality \eqref{PTpm} holds trivially for all $c_4$ small enough. A computation similar to \eqref{EDidiff} gives that for $i\ge 0$
\begin{align}
 \P(\mathfrak{D}_{i+1}=\mathfrak{D}_i+1\mid \mathcal{F}_i^{\mathfrak{B},\mathfrak{R}})
 &= \frac{ w(2\mathfrak{R}_i)}{ w(2\mathfrak{R}_i) +  w(2\mathfrak{B}_i+1)}
 = \frac{ w(i+\mathfrak{D}_i)}{ w(i+\mathfrak{D}_i) +  w(i-\mathfrak{D}_i+1)} \nonumber \\
 &= \frac{(1-\frac{\mathfrak{D}_i-1}{i})^\alpha(1+O(\frac{1}{i-\mathfrak{D}_i+1}))}{(1-\frac{\mathfrak{D}_i-1}{i})^\alpha(1+O(\frac{1}{i-\mathfrak{D}_i+1}))+(1+\frac{\mathfrak{D}_i}{i})^\alpha(1+O(\frac{1}{i+\mathfrak{D}_i}))}. \label{PDidiff}
\end{align}
If $\tau_{Mn}^\mathfrak{B} \leq i \leq \tau_{Mn}^\mathfrak{B} + T^+_{y,M,n} \wedge T^-_{y,M,n}$ and the event $\mathcal{G}_{y,M,n}$ occurs, then  $|\mathfrak{D}_i| \leq |\mathfrak{D}_i-\mathfrak{D}_{\tau_{Mn}^\mathfrak{B}}|+|\mathfrak{D}_{\tau_{Mn}^\mathfrak{B}}| \leq y \sqrt{n}+1+y\sqrt{Mn} \leq 2y\sqrt{Mn}-1$ if $M$ is large.
Moreover, $i \geq \tau_{Mn}^{\mathfrak{B}} \geq Mn$,
and thus  
\begin{align*}
 \left| \frac{\mathfrak{D}_i-1}{i} \right|, 
 \left| \frac{\mathfrak{D}_i}{i} \right| \leq \frac{2y}{\sqrt{Mn}}
 \quad \text{and}\quad 
 \frac{1}{i-\mathfrak{D}_i+1}, \frac{1}{i+\mathfrak{D}_i} \leq \frac{1}{Mn - 2y\sqrt{Mn}} \leq \frac{2}{Mn}, 
\end{align*}
where the last inequality holds for $y\leq\sqrt{n}$ if $M$ is sufficiently large. 
Using these bounds in \eqref{PDidiff}, a tedious but straightforward calculation yields that there are $k_0,M_1<\infty$ such that for all $1\leq y\leq \sqrt{n}$ and $M\geq M_1$, on the event $\mathcal{G}_{y,M,n}$ we have 
\[
 \left| \P\left(\mathfrak{D}_{i+1}=\mathfrak{D}_i+1 \mid \mathcal{F}_i^{\mathfrak{B},\mathfrak{R}} \right)
 - \frac{1}{2} \right| \leq \frac{k_0 y}{\sqrt{Mn}}, \quad 
 \text{for } \tau_{Mn}^\mathfrak{B} \leq i \leq \tau_{Mn}^\mathfrak{B} + T^+_{y,M,n} \wedge T^-_{y,M,n}.
\]
We can then couple $\big(\mathfrak{D}_{\tau_{Mn}^\mathfrak{B}+i}\big)_{i\geq 0}$ with biased random walks $S^{\pm}$ independent from $\mathcal{G}_{y,M,n}$ with $S^{\pm}_0=0$, $\P(S^{\pm}_{i+1}=S^{\pm}_i+1)=1-\P(S^{\pm}_{i+1}=S^{\pm}_i-1)=\frac{1}{2} \pm \frac{k_0y}{\sqrt{Mn}}$ so that if $\mathcal{G}_{y,M,n}$ occurs, we have
\[
 S^-_i \leq \mathfrak{D}_{\tau_{Mn}^\mathfrak{B}+i}-\mathfrak{D}_{\tau_{Mn}^\mathfrak{B}} \leq S^+_i, \quad \text{for } 
 i\leq T^+_{y,M,n} \wedge T^-_{y,M,n}.
\]
Therefore, 
\begin{align*}
& \P\left( T^+_{y,M,n} \wedge T^-_{y,M,n} \leq 2n+\lceil y \sqrt{n} \rceil \,|\,\mathcal{G}_{y,M,n} \right) \\
 &\quad \leq \P\left(\max_{0 \leq i \leq 2n+\lceil y \sqrt{n} \rceil}S^+_i \geq y \sqrt{n}\right) 
 + \P\left(\min_{0 \leq i \leq 2n+\lceil y \sqrt{n} \rceil}S^-_i \leq -y \sqrt{n}\right) 
 \\
 &\quad \leq 2\P\left(S^+_{2n+\lceil y \sqrt{n} \rceil} \geq y \sqrt{n}\right) 
 + 2\P\left(S^-_{2n+\lceil y \sqrt{n} \rceil} \leq -y \sqrt{n}\right) 
 = 4\P\left(S^+_{2n+\lceil y \sqrt{n} \rceil} \geq y \sqrt{n}\right), 
\end{align*}
where the second inequality follows from the reflection principle, as $\frac{1}{2} - \frac{k_0y}{\sqrt{Mn}}<\frac12<\frac{1}{2}+\frac{k_0y}{\sqrt{Mn}}$, and the last equality follows by symmetry. 
If we choose $M  \geq (16k_0)^2$, we have $\mathbb{E}(S^+_{2n+\lceil y \sqrt{n} \rceil}) = (2n+\lceil y \sqrt{n} \rceil) \frac{2k_0y}{\sqrt{Mn}} \leq \frac{y\sqrt{n}}{2}$, hence by Hoeffding's inequality 
\[
\P\left( T^+_{y,M,n} \wedge T^-_{y,M,n} \leq 2n+\lceil y \sqrt{n} \rceil \,|\,\mathcal{G}_{y,M,n} \right) 
 \leq 4 \P\left(S^+_{2n+\lceil y \sqrt{n} \rceil}-\mathbb{E}(S^+_{2n+\lceil y \sqrt{n} \rceil}) \geq \frac{y\sqrt{n}}{2}\right) \leq 4 e^{-\frac{y^2}{32}}.
\]
This shows \eqref{PTpm} and completes the proof.
\end{proof}

The next two results  follow from Lemmas \ref{Lemma1} and \ref{elenatype} and Corollaries \ref{cor-VDn1} and \ref{cor-VDn} exactly as in \cite{KMP23}. We shall not repeat the proofs.

\begin{lemma}[\cite{KMP23}, Lemma 4.7]\label{DtnD2n-diff}
 There exists a constant $C_5>0$ such that 
 \[
\mathbb{E}\left[ \left( \D_{\tau_n^\B} - \D_{2n} \right)^2 \right] \leq C_5 \sqrt{n}. 
 \]
\end{lemma}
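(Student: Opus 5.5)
The key observation is that $\tau_n^{\B}$ is close to $2n$: at time $\tau_n^{\B}$ exactly $n$ blue balls have been drawn, so $\tau_n^{\B} = 2n + \D_{\tau_n^{\B}}$. Hence $\D_{\tau_n^\B}-\D_{2n} = \D_{2n+\Delta}-\D_{2n}$ with $\Delta := \D_{\tau_n^\B}$. By Lemma \ref{Lemma1}, $\Delta$ is typically of order $\sqrt{n}$, with Gaussian tails on that scale. An increment of $\D$ over a time window of length about $\sqrt n$ should have second moment of order $\sqrt n$. This is the bound we aim for.

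The plan is as follows. Fix a large constant $K$ and split according to whether $|\Delta|\le K\sqrt{n\log n}$ (or, more carefully, according to dyadic shells $|\Delta|\in[j\sqrt n,(j+1)\sqrt n)$).

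On the shell $j$, we bound $|\D_{\tau_n^\B}-\D_{2n}|$ by
\[
\sup_{|i-2n|\le (j+1)\sqrt n}|\D_i-\D_{2n}|.
\]
To control this supremum we use Corollary \ref{cor-VDn} together with a maximal inequality. For $m\in[2n,2n+L]$, Corollary \ref{cor-VDn} gives
\[
\Var(\D_m-\D_{2n})\le L+C_3\sqrt n\log^2 n + C_3L^2/n.
\]
The $\sqrt n\log^2 n$ term is too large for a direct approach, so I would not use Corollary \ref{cor-VDn} naively. Instead I would use the approximate martingale structure \eqref{eq43} directly. Write
\[
\D_{i+1}-\D_i = \xi_{i+1} - \alpha\frac{\D_i}{i}+\varepsilon_i,
\]
where $\xi$ are martingale differences bounded by $2$. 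Over a window of length $L\le n$ starting at $2n$, the martingale part has maximal second moment $O(L)$ by Doob's inequality. The drift part is bounded by
\[
\sum \left(\alpha|\D_i|/i+|\varepsilon_i|\right)\lesssim L\,\max|\D_i|/n + L/n ,
\]
whose second moment is $O(L^2/n)$ by Lemma \ref{Lemma1}. Taking $L=(j+1)\sqrt n$ gives
\[
\mathbb E\Big[\sup_{|i-2n|\le L}|\D_i-\D_{2n}|^2\Big]\le C\big(L+L^2/n\big).
\]
For times before $2n$ the same argument applies starting from $2n-L$.

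Combining the shells is done with Cauchy--Schwarz:
\[
\mathbb E\big[(\D_{\tau_n^\B}-\D_{2n})^2\big]\le \sum_{j\ge0}\mathbb E\Big[\sup_{|i-2n|\le (j+1)\sqrt n}|\D_i-\D_{2n}|^4\Big]^{1/2}\P\big(|\Delta|\ge j\sqrt n\big)^{1/2}.
\]
This requires a fourth moment version of the maximal bound, which follows from Burkholder's inequality for bounded martingale differences, giving $O(L^2+L^4/n^2)$. The tail factor is $\P(|\Delta|\ge j\sqrt n)\le C_1e^{-c_1 j^2}$ for $j\le\sqrt n$, and $\P(|\Delta|\ge j\sqrt n)\le C_1e^{-c_1 j\sqrt n}$ beyond that (use the trivial bound $|\D|\le 4n$ there). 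Each term is therefore at most $C(j+1)\sqrt n\, e^{-c j^2/2}$, and the sum is $O(\sqrt n)$.

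The main obstacle is the maximal fourth-moment estimate for the increments over windows of length $\sqrt n$. We need to check that the drift $-\alpha\D_i/i+\varepsilon_i$ contributes only $O(L^2/n)$ and never the $\sqrt n\log^2 n$ error of Corollary \ref{cor-VDn}. This follows from the moment bounds $\mathbb E|\D_i|^4\le Cn^2$ for $i\le 4n$, which come from Lemma \ref{Lemma1}.
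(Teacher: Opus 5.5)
Your proof is correct, but it takes a different route from the paper. The paper does not reprove this lemma. It defers to \cite{KMP23}, where the bound comes from the tail estimate of Lemma \ref{Lemma1} together with the oscillation bound of Lemma \ref{elenatype}. That lemma gives Gaussian tails for the oscillation of $\D$ over windows $[\tau^{\B}_{Mn},\tau^{\B}_{(M+1)n}]$ indexed by numbers of blue draws, and it is proved by coupling $\D$ with biased simple random walks.

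You never use Lemma \ref{elenatype}. The identity $\tau_n^{\B}=2n+\D_{\tau_n^{\B}}$ reduces everything to oscillations over deterministic windows $|i-2n|\le(j+1)\sqrt n$. You control these through the drift decomposition \eqref{eq43}: Burkholder's inequality handles the bounded martingale part (fourth moment $O(L^2)$), and moment bounds from Lemma \ref{Lemma1} handle the drift. You then sum over shells with Cauchy--Schwarz against the Gaussian tail of $\D_{\tau_n^{\B}}$.

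The paper's route is more economical. Lemma \ref{elenatype} is needed anyway, it gives tail rather than moment control, and its windows match $\tau_n^{\B}$ directly. Your route is self-contained given \eqref{eq43} and Lemma \ref{Lemma1}, uses only standard martingale inequalities and no coupling, and shows plainly that the drift costs only $O(L^2/n)=O((j+1)^2)$ on the $\sqrt n$ scale.

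Two minor points need tidying.

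First, moments of $\max_i|\D_i|$ over the window do not follow directly from pointwise bounds. Instead apply Minkowski to the sum of absolute drifts: for $i\in[n,3n]$,
$\bigl\|\sum_i(\alpha|\D_i|/i+|\varepsilon_i|)\bigr\|_4\le\sum_i\bigl\|\alpha|\D_i|/i+|\varepsilon_i|\bigr\|_4\le CL/\sqrt n$.
Use $|\D_i|\le i$ to absorb $\D_i^2/i^2$ into $|\D_i|/i$, so fourth moments suffice. Alternatively, use $\max_i|\D_i|\le|\D_{2n-L}|+2L$, which loses only a polynomial factor in $j$.

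Second, $\D_{\tau_n^{\B}}$ is not bounded by $4n$. Consider the shells with $(j+1)\sqrt n>n$, where the window also leaves the range $i\ge n$ used for your drift bound. For these, use the trivial bound $|\D_i-\D_{2n}|\le|i-2n|$ together with $\P(|\D_{\tau_n^{\B}}|\ge j\sqrt n)\le C_1e^{-c_1 j\sqrt n/2}$, which Lemma \ref{Lemma1} gives for such $j$. These shells then contribute $o(1)$.
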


\begin{proposition}[\cite{KMP23}, Proposition 4.8]\label{VarDtmDtn}
  $\displaystyle\lim\limits_{n\to\infty}
    (2n)^{-1}\Var(\D_{\tau_n^{\B}}) = (2\alpha+1)^{-1}$. Moreover,
    there is a constant $C_6>0$ such that for $n$ sufficiently large and
    $m \in [n,2n]$
\[
\left|\Var(\D_{\tau_m^{\B}} - \D_{\tau_n^{\B}}) - 2(m-n) \right|
\leq C_6 n^{3/4} + C_6\frac{(m-n)^2}{n}.
\]
\end{proposition}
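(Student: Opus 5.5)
The plan is to compare $\D_{\tau_n^{\B}}$ with $\D_{2n}$ via Lemma \ref{DtnD2n-diff} and transfer the variance asymptotics from Corollaries \ref{cor-VDn1} and \ref{cor-VDn}.

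For the first statement, write $\D_{\tau_n^{\B}} = \D_{2n} + (\D_{\tau_n^{\B}}-\D_{2n})$. By Cauchy--Schwarz,
\[
\bigl|\Var(\D_{\tau_n^{\B}}) - \Var(\D_{2n})\bigr| \le \Var(\D_{\tau_n^{\B}}-\D_{2n}) + 2\sqrt{\Var(\D_{2n})\Var(\D_{\tau_n^{\B}}-\D_{2n})}.
\]
Lemma \ref{DtnD2n-diff} bounds the variance of the difference by $C_5\sqrt n$. Corollary \ref{cor-VDn1} gives $\Var(\D_{2n}) = O(n)$. Hence the error is $O(n^{3/4})$. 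Dividing by $2n$ and applying Corollary \ref{cor-VDn1} at $2n$ yields the limit $(2\alpha+1)^{-1}$.

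For the second statement, decompose
\[
\D_{\tau_m^{\B}}-\D_{\tau_n^{\B}} = (\D_{2m}-\D_{2n}) + E_m - E_n, \qquad E_k := \D_{\tau_k^{\B}}-\D_{2k}.
\]
Corollary \ref{cor-VDn} applies to the pair $2n \le 2m \le 4n$ (note $2m\in[2n,4n]$). It gives
\[
\bigl|\Var(\D_{2m}-\D_{2n}) - 2(m-n)\bigr| \le C_3\sqrt{2n}\log^2(2n) + C_3\frac{4(m-n)^2}{2n}.
\]
Then, with $X=\D_{2m}-\D_{2n}$ and $Y=E_m-E_n$, use $|\Var(X+Y)-\Var(X)| \le \Var(Y) + 2\sqrt{\Var X\,\Var Y}$. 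Lemma \ref{DtnD2n-diff} gives $\Var(Y)\le 2\,\mathbb{E}[E_m^2]+2\,\mathbb{E}[E_n^2] \le 4C_5\sqrt{2n}$.

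The main obstacle is the cross term $2\sqrt{\Var X\,\Var Y}$. Here $\Var X \le 2(m-n) + O(\sqrt n\log^2 n) + O((m-n)^2/n)$, so the cross term is of order $\sqrt{\Var X}\, n^{1/4}$. If $m-n\le \sqrt n$, then $\Var X = O(\sqrt n\log^2 n)$ and the cross term is $O(n^{1/2}\log n)\le Cn^{3/4}$. In general, I would use $\sqrt{ab}\le a+b$ after splitting: $\sqrt{(m-n)\sqrt n}\le n^{3/4}$ since $m-n\le n$. Likewise $\sqrt{(m-n)^2/n\cdot\sqrt n} = (m-n)n^{-1/4}\le n^{3/4}$, and $\sqrt{\sqrt n\log^2 n\cdot\sqrt n} = \sqrt n\log n\le n^{3/4}$. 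All error terms are therefore bounded by $C_6 n^{3/4} + C_6 (m-n)^2/n$, with $\sqrt n\log^2 n \le n^{3/4}$ for large $n$ absorbed into the first term.
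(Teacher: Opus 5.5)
Your proposal is correct and is essentially the argument the paper defers to (it says the result follows ``exactly as in'' \cite{KMP23}). You replace $\D_{\tau_k^{\B}}$ by $\D_{2k}$ using the $O(\sqrt{n})$ second-moment bound of Lemma \ref{DtnD2n-diff}, control the cross term by Cauchy--Schwarz to get the $O(n^{3/4})$ error, and then read off the variance asymptotics from Corollaries \ref{cor-VDn1} and \ref{cor-VDn}, the latter applied to the pair $2n\le 2m\le 4n$.
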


Finally, we turn to asymptotics of the mean of
$\mathfrak{D}_{\tau_n^\mathfrak{B}}$.  To this end, first note that
the next Lemma follows from Lemma \ref{Lemma1} and
\ref{DtnD2n-diff} exactly as in \cite{KMP23}.
\begin{lemma}[\cite{KMP23}, Lemma 4.9]\label{meanweak}
 $\lim_{n\to\infty} n^{-1/2} \mathbb{E}[ \mathfrak{D}_{\tau_n^\mathfrak{B}} ] = 0.$ 
\end{lemma}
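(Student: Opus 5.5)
The plan is to reduce the claim to a statement about $\mathbb{E}[\D_{2n}]$ and then use the approximate drift identity \eqref{eq43} to show that this mean is $O(1)$, or at least $o(\sqrt{n})$. By Cauchy--Schwarz and Lemma \ref{DtnD2n-diff},
\[
\left|\mathbb{E}[\D_{\tau_n^\B}] - \mathbb{E}[\D_{2n}]\right| \le \left(C_5\sqrt{n}\right)^{1/2} = O(n^{1/4}) = o(\sqrt{n}).
\]
So it suffices to prove $\mathbb{E}[\D_{2n}] = o(\sqrt n)$, that is, $\mathbb{E}[\D_m]=o(\sqrt m)$.

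The first step is to take expectations in \eqref{eq43}, which gives a linear recursion for $a_i := \mathbb{E}[\D_i]$:
\[
a_{i+1} = \Bigl(1-\frac{\alpha}{i}\Bigr)a_i + \mathbb{E}[\varepsilon_i], \qquad |\mathbb{E}[\varepsilon_i]| \le \bar C_2\, \frac{\mathbb{E}[\D_i^2]+i}{i^2}.
\]
Lemma \ref{Lemma1} gives $\mathbb{E}[\D_i^2]\le C i$ after integrating the tail bound; Corollary \ref{cor-VDn1} would also suffice. Hence $|\mathbb{E}[\varepsilon_i]|\le C'/i$.

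The second step is to solve the recursion. Set $p_i := \prod_{j=i_0}^{i-1}(1-\alpha/j)$, which is comparable to $i^{-\alpha}$. Then
\[
a_m = p_m a_{i_0} + \sum_{i=i_0}^{m-1} \frac{p_m}{p_{i+1}}\,\mathbb{E}[\varepsilon_i],
\]
and therefore
\[
|a_m| \le C m^{-\alpha} + C\sum_{i\le m}\Bigl(\frac{i}{m}\Bigr)^{\alpha}\frac{1}{i} = O(1).
\]
In particular $a_m=O(1)=o(\sqrt m)$. Combined with the first reduction, this yields $n^{-1/2}\mathbb{E}[\D_{\tau_n^\B}]\to0$.

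The main point needing care is the second step. One issue is choosing $i_0 > \alpha$ so that the factors $1-\alpha/j$ are positive. The other is checking that \eqref{eq43} holds for every $i\ge1$ with a uniform constant, including when $|\D_i|> i/2$; the proof of Lemma \ref{lemkey-var} already covers that case. The finitely many indices below $i_0$ contribute only a bounded amount, since $|a_{i_0}|\le i_0$.

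If the recursion were inconvenient, a fallback is to use only the crude bound $|\mathbb{E}[\varepsilon_i]|=O(1/i)$ together with the contraction in a cruder form. Even $a_m=O(\log m)$ would be enough for the conclusion.
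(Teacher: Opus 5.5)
Your proof is correct. The paper gives no argument of its own here; it only says the lemma follows from Lemmas \ref{Lemma1} and \ref{DtnD2n-diff} as in \cite{KMP23}. Your first step, Cauchy--Schwarz with Lemma \ref{DtnD2n-diff} to reduce to $\mathbb{E}[\D_{2n}]=o(\sqrt n)$, is the natural way Lemma \ref{DtnD2n-diff} enters. For the remaining bound you use an ingredient the paper does not name: the averaged drift estimate \eqref{eq43}. As you note, it holds for all $i\ge 1$ with a single constant, since the case $|\D_i|\ge i/2$ is handled in the proof of Lemma \ref{lemkey-var} and the expansion case has uniform constants because $w>0$. Solving the linear recursion for $a_i=\mathbb{E}[\D_i]$ gives the stronger conclusion $\mathbb{E}[\D_{2n}]=O(1)$, hence $\mathbb{E}[\D_{\tau_n^\B}]=O(n^{1/4})$, which is more than is needed.

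One small correction: your remark that Corollary \ref{cor-VDn1} would serve as well as Lemma \ref{Lemma1} for $\mathbb{E}[\D_i^2]\le Ci$ is wrong. That corollary controls $\Var(\D_i)=\mathbb{E}[\D_i^2]-a_i^2$, and $a_i$ is exactly the quantity you are trying to bound. The tail bound of Lemma \ref{Lemma1} is the right input.

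As an aside, T\'oth's identity \eqref{EDtn-dec}, combined with $\mathbb{E}[\D_{\tau_n^\B}^2]=O(n)$ and the tail bound from Lemma \ref{Lemma1}, already gives $\mathbb{E}[\D_{\tau_n^\B}]=O(1)$ directly. That route needs neither Lemma \ref{DtnD2n-diff} nor \eqref{eq43}.
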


We are now ready to give the limit of
$\mathbb{E}[ \mathfrak{D}^*_{\tau_n^\mathfrak{B}}]$, and this argument
again needs to be modified  to account for our
more general weight function.  Note that unlike all of the above results, the limit is different for the different urn processes
$\mathfrak{D}^*$ with $* \in \{-,0,+\}$. The result was stated in
\cite{KMP23} only for $\mathfrak{D}^+$ but here we state and prove the
limit for all three cases.

\begin{proposition}[\cite{KMP23}, Proposition 4.10] \label{proplimit}
\begin{equation}\label{EDtn-lim}
\lim_{n\to\infty} \mathbb{E}[\D^+_{\tau^{\B^+}_n}]
 = \frac{1}{2(2\alpha+1)}, 
 \quad 
 \lim_{n\to\infty} \mathbb{E}[\D^-_{\tau^{\B^-}_n}]
 = \frac{-(4\alpha+1)}{2(2\alpha+1)}, 
 \quad\text{and}\quad 
 \lim_{n\to\infty} \mathbb{E}[\D^0_{\tau^{\B^0}_n}]
 = \frac{-\alpha}{2\alpha+1}.
\end{equation}
\end{proposition}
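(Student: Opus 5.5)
The plan is to use Rubin's construction together with an exact optional-stopping identity, and then Taylor expand. In continuous time, let $R(t)$ and $B(t)$ be the numbers of red and blue marks in $(0,t]$. Define
\[
F_r(k)=\sum_{j=0}^{k-1}\frac{1}{r(j)},\qquad F_b(k)=\sum_{j=0}^{k-1}\frac{1}{b(j)} .
\]
Since $R$ jumps from $k$ to $k+1$ at rate $r(k)$, the process $F_r(R(t))-t$ is a martingale: its jumps have size $1/r(R(t))$ and occur at rate $r(R(t))$, so they exactly compensate the drift $-1$. Let $T=\sum_{i=1}^n B_i$ be the time of the $n$-th blue mark. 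This is a stopping time for the joint filtration and is independent of the red marks. At $T$ we have $R(T)=n+\D_{\tau_n^{\B}}$. Optional stopping then gives the exact identity
\[
\mathbb{E}\big[F_r(n+\D_{\tau_n^{\B}})\big]=\mathbb{E}[T]=F_b(n).
\]
To justify optional stopping, use $|F_r(R(t\wedge T))-t\wedge T|\le F_r(R(T))+T$. Here $T$ has all moments, and $F_r(R(T))\le C(n+|\D_{\tau_n^{\B}}|)^{\alpha+1}$ is integrable by the tail bound of Lemma \ref{Lemma1}.

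Next I expand both sides. For $\D^+$ we have $1/r(j)=1/w(2j)$ and $1/b(j)=1/w(2j+1)$. By \eqref{generalw}, the two sequences agree up to $(2j)^\alpha(1+B/j+O(j^{-2}))$ versus $(2j+1)^\alpha(1+2B/(2j+1)+O(j^{-2}))$. Summing, the $B$-contributions coincide to order $k^\alpha$, and I expect
\[
F_r(k)-F_b(k)=-2^{\alpha-1}k^\alpha+O(k^{\alpha-1}).
\]
Write $D=\D^+_{\tau_n^{\B}}$. On $|D|\le n/2$, a second-order Taylor expansion of $F_r$ around $n$, using $F_r(n+1)-F_r(n)=(2n)^\alpha(1+O(1/n))$ and the corresponding discrete second difference $\approx\alpha 2^\alpha n^{\alpha-1}$, gives
\[
F_r(n+D)-F_r(n)=2^\alpha\Big[n^\alpha D+\tfrac{\alpha}{2}n^{\alpha-1}D^2\Big]+O\big(n^{\alpha-1}|D|+n^{\alpha-2}|D|^3\big).
\]
On $|D|>n/2$, I use a crude polynomial bound together with the tail of Lemma \ref{Lemma1}, which makes this contribution $o(n^\alpha)$.

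I then take expectations, divide by $2^\alpha n^\alpha$, and substitute into the identity. The error terms are $O(n^{-1}\mathbb{E}|D|+n^{-2}\mathbb{E}|D|^3)=O(n^{-1/2})$ by Lemma \ref{Lemma1}. This leaves
\[
\mathbb{E}[D]=\tfrac12-\tfrac{\alpha}{2}\,\frac{\mathbb{E}[D^2]}{n}+o(1).
\]
By Proposition \ref{VarDtmDtn} and Lemma \ref{meanweak}, $\mathbb{E}[D^2]/n\to 2/(2\alpha+1)$, hence $\mathbb{E}[D]\to \frac12-\frac{\alpha}{2\alpha+1}=\frac{1}{2(2\alpha+1)}$.

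The other two urns are handled identically. For $\D^-$ the roles of $w(2j)$ and $w(2j+1)$ are swapped, so $F_r(n)-F_b(n)=+2^{\alpha-1}n^\alpha+O(n^{\alpha-1})$ and the limit is $-\frac12-\frac{\alpha}{2\alpha+1}=-\frac{4\alpha+1}{2(2\alpha+1)}$. For $\D^0$ we have $F_r=F_b$, and the limit is $-\frac{\alpha}{2\alpha+1}$. In each case the variance input is the same, since Proposition \ref{VarDtmDtn} holds for every $*$.

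I expect the main obstacle to be the careful asymptotics of $F_r$ and $F_b$ to order $n^\alpha$ with error $O(n^{\alpha-1})$. This includes verifying that the $B$-terms cancel at that order, and that the $O(n^{-2})$ remainder in \eqref{generalw}, summed, contributes only $O(n^{\alpha-1})$ (with the usual care when $\alpha\le 1$, where such sums may be bounded or logarithmic but are still $o(n^\alpha)$). A second technical point is controlling the Taylor remainder uniformly without monotonicity of $w$. This is why I would split on $|D|\le n/2$, where only the asymptotic form \eqref{generalw} is needed, and handle the complementary event purely through the exponential tails of Lemma \ref{Lemma1}.
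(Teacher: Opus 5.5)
Your proposal is correct and follows essentially the same route as the paper: the identity $\mathbb{E}[F_r(n+\D_{\tau_n^{\B}})]=F_b(n)$ (which the paper cites as Lemma 1 of T\'oth, while you rederive it by optional stopping in Rubin's construction), then a Taylor expansion of $\sum_{j=n}^{n+D-1}1/r(j)$ about $j=n$ giving $(2n)^\alpha\mathbb{E}[D]+\alpha(2n)^\alpha\mathbb{E}[D^2]/(2n)$, the second-moment input from Proposition~\ref{VarDtmDtn} and Lemma~\ref{meanweak}, and \eqref{eq_seriesDL} for the right-hand side. The differences are cosmetic: you cut at $n/2$ and control the remainder through $\mathbb{E}|D|^3=O(n^{3/2})$, whereas the paper cuts at $n^{3/5}$ with a uniform remainder $O(n^{\alpha-4/5})$; you also correctly note that for $\alpha\le 1$ the error in $F_r-F_b$ is only $o(n^\alpha)$ rather than $O(n^{\alpha-1})$, which is all that is needed.
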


\begin{proof}
  The proof is similar to that of Proposition
  4.10 of \cite{KMP23}, but, once again, the greater generality of $w$
  forces us to make a few changes. We will use the
  convention that $\sum_{i=n}^{n+m-1}(\cdot)=0$ if $m=0$ and
  $\sum_{i=n}^{n+m-1}(\cdot)=- \sum_{i=n+m}^{n-1}(\cdot)$ if $m<0$. With
    this convention, Lemma 1 of \cite{tGRK} states that
 \[
  \mathbb{E}\left(\sum_{j=0}^{\mathfrak{R}^*_{\tau_n^{\mathfrak{B}^*}}-1}\frac{1}{r^*(j)}\right)=\sum_{j=0}^{n-1}\frac{1}{b^*(j)}. 
 \]
 Since $\mathfrak{R}^*_{\tau_n^{\mathfrak{B}^*}}=\mathfrak{D}^*_{\tau_n^{\mathfrak{B}^*}}+n$, it follows that 
 \begin{equation}\label{EDtn-dec}
  (2n)^\alpha \mathbb{E}(\mathfrak{D}^*_{\tau_n^{\mathfrak{B}^*}})+\mathbb{E}\left(\sum_{j=n}^{\mathfrak{D}^*_{\tau_n^{\mathfrak{B}^*}}+n-1}\left(\frac{1}{r^*(j)}-(2n)^\alpha\right)\right)=\sum_{j=0}^{n-1}\left(\frac{1}{b^*(j)}-\frac{1}{r^*(j)}\right).
 \end{equation}
For the asymptotics of the left side, we first
 notice that, since $(r^*(j))^{-1}=O(j^\alpha)$, there exists a constant $C_7>0$ such that 
 \begin{align*}
  &\left|\mathbb{E}\left(\sum_{j=n}^{\mathfrak{D}^*_{\tau_n^{\mathfrak{B}^*}}+n-1}\left(\frac{1}{r^*(j)}-(2n)^\alpha\right)\mathbbm{1}_{\{|\mathfrak{D}^*_{\tau_n^{\mathfrak{B}^*}}| \geq n^{3/5}\}}\right)\right| \\
  &\qquad\qquad  \leq C_7 \mathbb{E}\left(\left(|\mathfrak{D}^*_{\tau_n^{\mathfrak{B}^*}}|+n\right)^{\alpha+1}\mathbbm{1}_{\{|\mathfrak{D}^*_{\tau_n^{\mathfrak{B}^*}}| \geq n^{3/5}\}}\right) \underset{n \to +\infty}{\longrightarrow} 0,
 \end{align*}
 by Lemma \ref{Lemma1}. To deal with the case $|\mathfrak{D}^*_{\tau_n^{\mathfrak{B}^*}}| < n^{3/5}$, 
 we write 
 \begin{align*}
  \frac{1}{w(2j)} = (2j)^\alpha + O(j^{\alpha-1}) 
  &= (2n)^\alpha\left( 1 + \frac{j-n}{n} \right)^\alpha + O(j^{\alpha-1}) \\
 & = (2n)^\alpha \left\{ 1 + \alpha \frac{j-n}{n} + O\left( \frac{(j-n)^2}{n^2}  \right) \right\}  + O(j^{\alpha-1}),  
 \end{align*}
 and observe that 
 \begin{align*}
  \frac{1}{w(2j)} - \frac{1}{w(2j+1)} & = (2j)^\alpha - (2j+1)^\alpha + O(j^{\alpha-1}) = (2j)^\alpha\left(1-\left(1+\frac{1}{2j}\right)^\alpha\right) + O(j^{\alpha-1}) \\
  &  = (2j)^\alpha\left(1-1-\alpha\frac{1}{2j}+O\left(\frac{1}{(2j)^2}\right)\right) + O(j^{\alpha-1}) = O(j^{\alpha-1}).
 \end{align*}
 Since $r^*(j) = w(2j)$ or $w(2j+1)$, we can conclude that there is a constant $C_7'$ such that
 \[
  \left|\frac{1}{r^*(j)}-(2n)^\alpha-\alpha(2n)^\alpha\,\frac{j-n}{n}\right| 
  \leq C_7'n^{\alpha-\frac{4}{5}}, \quad \text{ for all } |n-j| \leq n^{3/5}.
 \]
It follows that 
\begin{align*}
 &\mathbb{E}\left(\sum_{j=n}^{\mathfrak{D}^*_{\tau_n^{\mathfrak{B}^*}}+n-1}\left(\frac{1}{r^*(j)}-(2n)^\alpha\right)\mathbbm{1}_{\{|\mathfrak{D}^*_{\tau_n^{\mathfrak{B}^*}}| < n^{3/5}\}}\right) \\
 &\qquad = \alpha (2n)^\alpha \mathbb{E}\left( \sum_{j=n}^{\mathfrak{D}^*_{\tau_n^{\mathfrak{B}^*}}+n-1}\left( \frac{j-n}{n} \right) \mathbbm{1}_{\{|\mathfrak{D}^*_{\tau_n^{\mathfrak{B}^*}}| < n^{3/5}\}}\right) + O\left( n^{\alpha-\frac{1}{5}} \right) \\
 &\qquad = \alpha (2n)^\alpha \mathbb{E}\left( \frac{|\mathfrak{D}^*_{\tau_n^{\mathfrak{B}^*}}||\mathfrak{D}^*_{\tau_n^{\mathfrak{B}^*}}-1|}{2n} \mathbbm{1}_{\{|\mathfrak{D}^*_{\tau_n^{\mathfrak{B}^*}}| < n^{3/5}\}}\right) + O\left( n^{\alpha-\frac{1}{5}} \right). 
\end{align*}
Moreover, Lemma \ref{Lemma1}, Proposition \ref{VarDtmDtn}, and Lemma \ref{meanweak} together imply that 
\[
 \mathbb{E}\left( \frac{|\mathfrak{D}^*_{\tau_n^{\mathfrak{B}^*}}||\mathfrak{D}^*_{\tau_n^{\mathfrak{B}^*}}-1|}{2n} \mathbbm{1}_{\{|\mathfrak{D}^*_{\tau_n^{\mathfrak{B}^*}}| < n^{3/5}\}}\right)
 = \frac{\mathbb{E}\left[ \mathfrak{D}^*_{\tau_n^{\mathfrak{B}^*}} \left( \mathfrak{D}^*_{\tau_n^{\mathfrak{B}^*}} - 1 \right) \right]}{2n} + O(n^{-1})
 = \frac{1}{2\alpha+1} + o(1).  
\]
Combining all of the above estimates with \eqref{EDtn-dec}, we have
\begin{equation*}
 (2n)^\alpha \mathbb{E}(\mathfrak{D}^*_{\tau_n^{\mathfrak{B}^*}})
 + \frac{\alpha}{2\alpha+1} (2n)^\alpha + o(n^\alpha) = 
 \sum_{j=0}^{n-1}\left(\frac{1}{b^*(j)}-\frac{1}{r^*(j)}\right).
\end{equation*}
Dividing everything by $(2n)^\alpha$ and taking $n\to \infty$ we get 
\begin{align*}
 \lim_{n\to\infty} \mathbb{E}(\mathfrak{D}^*_{\tau_n^{\mathfrak{B}^*}})
 &= -\frac{\alpha}{2\alpha+1} + \lim_{n\to\infty} \frac{1}{(2n)^\alpha} \sum_{j=0}^{n-1}\left(\frac{1}{b^*(j)}-\frac{1}{r^*(j)}\right) 
 = -\frac{\alpha}{2\alpha+1} 
 +
 \begin{cases}
  \frac{1}{2} & \text{if } *=+ \\
  -\frac{1}{2} & \text{if } *=- \\
  0 & \text{if } *=0, 
 \end{cases}
\end{align*}
where the last equality follows from \eqref{rb} and \eqref{eq_seriesDL}. 
\end{proof}

\bibliographystyle{alpha}

\end{document}